\newcommand{\email}[1]{\href{mailto:#1}{#1}}
\numberwithin{equation}{section}
\newtheorem{theorem}{Theorem}
\newtheorem{proposition}[theorem]{Proposition}
\newtheorem{lemma}[theorem]{Lemma}
\newtheorem{corollary}[theorem]{Corollary}
\theoremstyle{remark}
\newtheorem{remark}[theorem]{Remark}
\theoremstyle{definition}
\newcommand{\st}{\,:\,}
\newcommand{\Real}{\mathbb{R}}
\DeclareRobustCommand{\bvec}[1]{\boldsymbol{#1}}
  \renewcommand{\bvec}[1]{#1}%
\newcommand{\uvec}[1]{\underline{\bvec{#1}}}
\newcommand{\cvec}[1]{\bvec{\mathcal{#1}}}
\DeclareMathOperator{\GRAD}{\bf grad}
\DeclareMathOperator{\CURL}{\bf curl}
\DeclareMathOperator{\DIV}{div}
\DeclareMathOperator{\ROT}{rot}
\DeclareMathOperator{\VROT}{\bf rot}
\newcommand{\compl}{{\rm c}}
\newcommand{\Hcurl}[1]{\bvec{H}(\CURL;#1)}
\newcommand{\Hdiv}[1]{\bvec{H}(\DIV;#1)}
\newcommand{\Xgrad}[2]{\underline{X}_{\GRAD,#2}^{#1}}
\newcommand{\Xcurl}[2]{\underline{\bvec{X}}_{\CURL,#2}^{#1}}
\newcommand{\Xdiv}[2]{\underline{\bvec{X}}_{\DIV,#2}^{#1}}
\newcommand{\Xbullet}[2]{\underline{X}_{\bullet,#2}^{#1}}
\newcommand{\XgradZR}[2]{\widetilde{\underline{X}}_{\GRAD,#2}^{#1}}
\newcommand{\XcurlZR}[2]{\widetilde{\underline{\bvec{X}}}_{\CURL,#2}^{#1}}
\newcommand{\XdivZR}[2]{\widetilde{\underline{\bvec{X}}}_{\DIV,#2}^{#1}}
\newcommand{\XbulletZR}[2]{\widetilde{\underline{X}}_{\bullet,#2}^{#1}}
\newcommand{\Xg}[1]{X_{#1}}
\newcommand{\dg}[1]{{\rm d}_{#1}}
\newcommand{\tXg}[1]{\widetilde{X}_{#1}}
\newcommand{\tdg}[1]{{\rm \widetilde{d}}_{#1}}
\newcommand{\Eg}[1]{E_{#1}}
\newcommand{\Rg}[1]{R_{#1}}
\newcommand{\Egrad}[1]{\underline{E}_{\GRAD,#1}}
\newcommand{\EPoly}[1]{E_{\mathcal{P},#1}^{k-1}}
\newcommand{\Rgrad}[1]{\underline{R}_{\GRAD,#1}}
\newcommand{\Ecurl}[1]{\uvec{E}_{\CURL,#1}}
\newcommand{\ERoly}[1]{\bvec{E}_{\cvec{R},#1}^{k-1}}
\newcommand{\Rcurl}[1]{\uvec{R}_{\CURL,#1}}
\newcommand{\Ediv}[1]{\uvec{E}_{\DIV,#1}}
\newcommand{\EGoly}[1]{\bvec{E}_{\cvec{G},#1}^{k-1}}
\newcommand{\Rdiv}[1]{\uvec{R}_{\DIV,#1}}
\newcommand{\Rbullet}[1]{\underline{R}_{\bullet,#1}}
\newcommand{\dRmgrad}[1]{\kappa_{\GRAD}}
\newcommand{\dRmcurl}[1]{\kappa_{\CURL}}
\newcommand{\dRmdiv}[1]{\kappa_{\DIV}}
\newcommand{\dRmpoly}[1]{\kappa_{\Poly{}}}
\newcommand{\Id}{{\rm Id}}
\newcommand{\sfP}{{\mathsf{P}}}
\newcommand{\Igrad}[2]{\underline{I}_{\GRAD,#2}^{#1}}
\newcommand{\lproj}[2]{\pi_{\mathcal{P},#2}^{#1}}
\newcommand{\Rproj}[2]{\bvec{\pi}_{\cvec{R},#2}^{#1}}
\newcommand{\Rcproj}[2]{\bvec{\pi}_{\cvec{R},#2}^{\compl,#1}}
\newcommand{\Gproj}[2]{\bvec{\pi}_{\cvec{G},#2}^{#1}}
\newcommand{\Gcproj}[2]{\bvec{\pi}_{\cvec{G},#2}^{\compl,#1}}
\newcommand{\uGT}[1]{\uvec{G}_T^{#1}}
\newcommand{\uGTp}[1]{\uvec{G}_{T'}^{#1}}
\newcommand{\uGF}[1]{\uvec{G}_F^{#1}}
\newcommand{\uCT}[1]{\uvec{C}_T^{#1}}
\newcommand{\uGh}[1]{\uvec{G}_h^{#1}}
\newcommand{\uCh}[1]{\uvec{C}_h^{#1}}
\newcommand{\Dh}[1]{D_h^{#1}}
\newcommand{\GE}[1]{G_E^{#1}}
\newcommand{\cGF}[1]{\boldsymbol{\mathsf{G}}_F^{#1}}
\newcommand{\cGT}[1]{\boldsymbol{\mathsf{G}}_T^{#1}}
\newcommand{\CF}[1]{C_F^{#1}}
\newcommand{\cCT}[1]{\boldsymbol{\mathsf{C}}_T^{#1}} 
\newcommand{\DT}[1]{D_T^{#1}}
\newcommand{\trE}{\gamma_E^{k+1}}
\newcommand{\trF}{\gamma_F^{k+1}}
\newcommand{\trFt}[1]{\bvec{\gamma}_{{\rm t},F}^{#1}}
\newcommand{\elements}[1]{\mathcal{T}_{#1}}
\newcommand{\faces}[1]{\mathcal{F}_{#1}}
\newcommand{\edges}[1]{\mathcal{E}_{#1}}
\newcommand{\vertices}[1]{\mathcal{V}_{#1}}
\newcommand{\FT}{\faces{T}}
\newcommand{\ET}{\edges{T}}
\newcommand{\EF}{\edges{F}}
\newcommand{\VT}{\vertices{T}}
\newcommand{\VF}{\vertices{F}}
\newcommand{\VE}{\vertices{E}}
\newcommand{\normal}{\bvec{n}}
\newcommand{\tangent}{\bvec{t}}
\newcommand{\Poly}[1]{\mathcal{P}^{#1}}
\newcommand{\vPoly}[1]{\cvec{P}^{#1}}
\newcommand{\Roly}[1]{\cvec{R}^{#1}}
\newcommand{\Goly}[1]{\cvec{G}^{#1}}
\newcommand{\cRoly}[1]{\cvec{R}^{\compl,#1}}
\newcommand{\cGoly}[1]{\cvec{G}^{\compl,#1}}
\DeclareMathOperator{\Ker}{Ker}
\DeclareMathOperator{\Image}{Im}
\newcommand{\Mh}{\mathcal{M}_{h}}
\newcommand{\Th}{\elements{h}}
\newcommand{\Fh}{\faces{h}}
\newcommand{\Eh}{\edges{h}}
\newcommand{\Vh}{\vertices{h}}
\newcommand{\Pcurl}[1]{\bvec{P}_{\CURL,T}^{#1}}
\newcommand{\Pdiv}[1]{\bvec{P}_{\DIV,T}^{#1}}
\newcommand{\DDR}[1]{{\rm DDR}(\ensuremath{#1})}
\newcommand{\Ebullet}[1]{\underline{E}_{\bullet,#1}}
\newcommand{\Pibullet}[1]{\underline{\Pi}_{\bullet,#1}}
\newcommand{\Dgrad}{\underline{\mathcal D}_{\GRAD}^k}
\newcommand{\Dcurl}{\underline{\mathcal D}_{\CURL}^k}
\newcommand{\Ddiv}{\underline{\mathcal D}_{\DIV}^k}
\newcommand{\coH}[1]{\mathcal{H}^{#1}}
\begin{document}

\title{Cohomology of the discrete de Rham complex on domains of general topology}

\author[1]{Daniele A. Di Pietro}
\author[2]{J\'er\^ome Droniou}
\author[3]{Silvano Pitassi}
\affil[1]{IMAG, Univ Montpellier, CNRS, Montpellier, France, \email{daniele.di-pietro@umontpellier.fr}}
\affil[2]{School of Mathematics, Monash University, Melbourne, Australia, \email{jerome.droniou@monash.edu}}
\affil[3]{DPIA, University of Udine, Udine, Italy, \email{pitassi.silvano@spes.uniud.it}}
\maketitle

\begin{abstract}
  In this work we prove that, for a general polyhedral domain of $\Real^3$, the cohomology spaces of the discrete de Rham complex of [Di Pietro and Droniou, An arbitrary-order discrete de Rham complex on polyhedral meshes: Exactness, Poincaré inequalities, and consistency, Found. Comput. Math. 23, pp. 85--164, 2023, DOI: \href{https://dx.doi.org/10.1007/s10208-021-09542-8}{10.1007/s10208-021-09542-8}] are isomorphic to those of the continuous de Rham complex.
  This is, to the best of our knowledge, the first result of this kind for a fully computable arbitrary-order complex built from a general polyhedral mesh.
  \medskip\\
  \textbf{Key words.} discrete de Rham method, compatible discretisations, polytopal methods, cohomology\medskip\\
  \textbf{MSC2010.} 65N30, 65N99, 65N12
\end{abstract}



\section{Introduction}

The well-posedness of relevant classes of partial differential equations hinges on subtle analytical and homological properties that underpin \emph{Hilbert complexes}.
A Hilbert complex is a sequence of Hilbert spaces $X_i$ connected by closed densely defined linear operators $\dg{i}:X_i\to X_{i+1}$ such that the \emph{complex property} holds, i.e., the range of $\dg{i}$ is contained in the kernel of $\dg{i+1}$.
The best-known example is the de Rham complex which, for a connected domain $\Omega$ of $\Real^3$, reads
\begin{equation}\label{eq:de-rham}
  \begin{tikzcd}
    \Real\arrow{r}{\mathfrak I} & H^1(\Omega)
    \arrow{r}{\GRAD} & \Hcurl{\Omega}
    \arrow{r}{\CURL} & \Hdiv{\Omega}
    \arrow{r}{\DIV} & L^2(\Omega)
    \arrow{r}{0} & 0,
  \end{tikzcd}
\end{equation}
where $\mathfrak I$ identifies real numbers with constant functions,
$H^1(\Omega)$ is spanned by scalar-valued functions that are square-integrable over $\Omega$ along with their gradient,
while $\Hcurl{\Omega}$ and $\Hdiv{\Omega}$ are spanned by vector-valued functions that are square-integrable over $\Omega$ along with their curl and divergence, respectively.
The complex property corresponds, in this case,  to the classical relations $\GRAD\mathfrak{I} = \bvec{0}$, $\CURL\GRAD = \bvec{0}$, and $\DIV\CURL = 0$.
We notice that we moreover have $\Image \DIV=L^2(\Omega)$. Depending
on the topology of $\Omega$, the previous properties can become stronger.
Specifically, if $\Omega$ is not crossed by any tunnel (i.e., its first Betti number $b_1$ is zero), then $\Image\GRAD = \Ker\CURL$.
Similarly, if $\Omega$ does not enclose any void (i.e., its second Betti number $b_2$ is zero), then $\Image\CURL = \Ker\DIV$.
When both these properties hold, the complex is said to be \emph{exact}.
For domains with more complicated topologies, the defect of exactness is reflected by the fact that certain among the following \emph{cohomology spaces} may be non-trivial:
\begin{equation}\label{eq:cohomology.spaces}
  \begin{alignedat}{4}
    \coH{0}&\coloneq\Ker\GRAD/\Image\mathfrak{I},&\qquad
    \coH{1} &\coloneq\Ker\CURL/\Image\GRAD,
    \\
    \coH{2}&\coloneq\Ker\DIV/\Image\CURL,&\qquad
    \coH{3}&\coloneq L^2(\Omega)/\Image\DIV.
  \end{alignedat}
\end{equation}
It holds $\coH{0}=\{0\}$ (since $\Omega$ is connected) and $\coH{3}=\{0\}$ (since $\Image\DIV=L^2(\Omega)$ as mentioned above).
However, $\coH{1}$ or $\coH{2}$ are non-trivial if, respectively, $\Omega$ is crossed by tunnel(s) or encloses void(s).
As a matter of fact, the de Rham theorem states that the above cohomology spaces, with the exception of $\coH{0}$, are isomorphic to the simplicial cohomology spaces through the \emph{de Rham maps} \cite{Dodziuk:76}.
Notice, in passing, that the isomorphism could be extended to $\coH{0}$ with the usual choice of replacing $\Real$ with the trivial space at the beginning of the sequence, but this would result in the impossibility for the complex to be exact.
An important consequence of the de Rham theorem is that each $\coH{i}$, $i\in\{1,2,3\}$, has dimension equal to $b_i$.

The \emph{compatible} numerical approximation of problems whose stability hinges on Hilbert complexes is based on discrete versions of the relevant complex with cohomology spaces isomorphic to the continuous ones.
The first Finite Element approximations of vector-valued spaces in the de Rham sequence date from the late 70s \cite{Raviart.Thomas:77,Nedelec:80}.
Discrete $\Hdiv{\Omega}$- and $L^2(\Omega)$-conforming Finite Element spaces mimicking the exactness property of the rightmost portion of the de Rham complex have been used since the early 80s to prove the stability of mixed formulations of scalar diffusion problems \cite{Douglas.Roberts:82,Douglas.Roberts:85}.
The first use of the full de Rham complex, on the other hand, was made a few years later to devise stable approximations of problems in computational electromagnetism \cite{Bossavit:88}.
In recent years, the study of compatible Finite Elements has gravitated towards generalisations based on the formalism of exterior calculus (see, e.g., \cite{Arnold:18} and references therein).

While very powerful from certain points of view, the Finite Element paradigm is typically limited to meshes composed of elements with a limited number of shapes (usually, tetrahedra or hexahedra), which makes meshing complicated geometries or local mesh refinement more challenging.
To circumvent this limitation, \emph{polytopal paradigms} have emerged enabling the support of much more general meshes, including, e.g., non-matching interfaces and polyhedral elements.
The high-level approach of polytopal paradigms can also lead, even on standard (e.g., hexahedral) meshes, to a lower number of unknowns compared to Finite Elements; see, e.g., \cite[Table 2]{Di-Pietro.Droniou:21*2}.
A first example of low-order polyhedral discrete Hilbert complex has been obtained using the Mimetic Finite Difference method (see, e.g., \cite{Beirao-da-Veiga.Lipnikov.ea:14} and references therein).
Related approach are the Discrete Geometric Approach \cite{Codecasa.Specogna.ea:09} and Compatible Discrete Operators \cite{Bonelle.Ern:14,Bonelle.Di-Pietro.ea:15}, that hinge on the notions of dual mesh and discrete Hodge operators.

More recent developments have focused on the extension to high order of accuracy.
We can cite, e.g., the Virtual Element Method, which has been used in \cite{Beirao-da-Veiga.Brezzi.ea:16,Beirao-da-Veiga.Brezzi.ea:18*2} to devise arbitrary-order discrete Hilbert complexes on general polyhedral meshes.

Recent works \cite{Di-Pietro.Droniou.ea:20,Di-Pietro.Droniou:21*2} have introduced a fully discrete approach to the design and analysis of arbitrary-order discrete de Rham (DDR) complexes on general polyhedral meshes; see \cite{Di-Pietro.Droniou:21*1,Di-Pietro.Droniou:21*3,Hanot:21,Di-Pietro.Droniou:22,Di-Pietro.Droniou:22*1,Botti.Di-Pietro.ea:22} for applications and further developments, as well as \cite{Beirao-da-Veiga.Dassi.ea:22} for an in-depth study of the links with the Virtual Element Method.
The basic idea of the DDR approach is to replace both spaces and operators with discrete counterparts.
The discrete operators are constructed, through discrete integration by parts formulas, in order to fulfill suitable polynomial consistency properties.
Assuming $\Omega$ polyhedral and following the notations of \cite{Di-Pietro.Droniou.ea:20,Di-Pietro.Droniou:21*2}, the DDR complex corresponding to a polynomial degree $k\ge 0$ reads
\begin{equation}\label{eq:ddr.complex}
  \DDR{k}\coloneq
  \begin{tikzcd}
    \Real\arrow{r}{\Igrad{k}{h}} &[1.5em] \Xgrad{k}{h}
    \arrow{r}{\uGh{k}} & \Xcurl{k}{h}
    \arrow{r}{\uCh{k}} & \Xdiv{k}{h} 
    \arrow{r}{\Dh{k}}  & \Poly{k}(\Th) 
    \arrow{r} & 0.
  \end{tikzcd}
\end{equation}
Precise definitions of the spaces and operators above are provided in Section \ref{sec:ddr.complex}.
The cohomology spaces of this complex are
\begin{equation}\label{eq:cohomology.spaces:discrete}
  \begin{alignedat}{4}
  \coH{0,(k)}&\coloneq\Ker\uGh{k}/\Image\Igrad{k}{h},&\quad
  \coH{1,(k)} &\coloneq\Ker\uCh{k}/\Image\uGh{k},
  \\
  \coH{2,(k)}&\coloneq\Ker\Dh{k}/\Image\uCh{k},&\quad
  \coH{3,(k)}&\coloneq \Poly{k}(\Th)/\Image\Dh{k}.
  \end{alignedat}
\end{equation} 
It has been proved in \cite[Theorems 1 and 2]{Di-Pietro.Droniou:21*2} that these cohomology spaces are trivial for domains with trivial topology (i.e., such that $b_0 = 1$ and $b_1 = b_2 = b_3 = 0$), so that the $\DDR{k}$ complex is exact in this case.
The purpose of this paper is to study these spaces for domains with non-trivial topologies by proving the following theorem:

\begin{theorem}[Cohomology of the $\DDR{k}$ complex]\label{thm:ddr:cohomology}
  For any $k\ge 0$, the cohomology spaces defined by \eqref{eq:cohomology.spaces:discrete} are isomorphic to the de Rham cohomology spaces \eqref{eq:cohomology.spaces}.
\end{theorem}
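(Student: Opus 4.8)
\emph{Strategy.} Since the de Rham theorem \cite{Dodziuk:76} already identifies the continuous spaces \eqref{eq:cohomology.spaces} (for $i\in\{1,2,3\}$) with the cellular cohomology of $\Omega$, it suffices to exhibit an isomorphism between the discrete spaces \eqref{eq:cohomology.spaces:discrete} and the cellular cohomology of the polytopal complex carried by the mesh. The plan is therefore to regard $(\Vh,\Eh,\Fh,\Th)$ as a regular CW-decomposition of $\overline{\Omega}$, to denote by $C^\bullet(\Th)$ its cellular cochain complex (with coboundary the transpose of the mesh incidence matrices), and to construct a cochain map $\underline{\Phi}^{\bullet}$ from $\DDR{k}$ to $C^\bullet(\Th)$ that induces an isomorphism on cohomology. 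Composing the resulting isomorphism with the de Rham isomorphism then yields Theorem \ref{thm:ddr:cohomology}.

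\emph{The comparison map.} I would define $\underline{\Phi}^{\bullet}$ through the discrete analogues of the de Rham maps, namely the lowest-order moments attached to the mesh entities: $\underline{\Phi}^{0}$ reads off the vertex values of an element of $\Xgrad{k}{h}$; $\underline{\Phi}^{1}$ produces the $1$-cochain of tangential edge integrals of the reconstruction $\Pcurl{k}$ of an element of $\Xcurl{k}{h}$; $\underline{\Phi}^{2}$ produces the $2$-cochain of normal face integrals of $\Pdiv{k}$; and $\underline{\Phi}^{3}$ integrates over each cell. The cochain-map property $\mathrm{d}_{C}\circ\underline{\Phi}^{i}=\underline{\Phi}^{i+1}\circ(\text{DDR differential})$ is then a collection of discrete Stokes identities, e.g. that the edge integral of the tangential reconstruction of $\uGh{k}$ equals the difference of the scalar potential at the two endpoints. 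These identities hold by construction: the DDR operators are defined through integration-by-parts formulas precisely so that such Stokes relations are satisfied, so checking that $\underline{\Phi}^{\bullet}$ is a cochain map should be a bookkeeping exercise using the definitions recalled in Section \ref{sec:ddr.complex}.

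\emph{Quasi-isomorphism.} To show $\underline{\Phi}^{\bullet}$ induces an isomorphism on cohomology I would prove that it is surjective and that its kernel $\underline{K}^{\bullet}\coloneq\Ker\underline{\Phi}^{\bullet}$ is an acyclic subcomplex; the long exact sequence associated with the short exact sequence $0\to\underline{K}^{\bullet}\to\DDR{k}\to C^\bullet(\Th)\to 0$ then forces $\underline{\Phi}^{\bullet}$ to be a quasi-isomorphism. Surjectivity follows because the lowest-order entity moments can realise arbitrary cellular cochains. Acyclicity of $\underline{K}^{\bullet}$ is where the work concentrates: the kernel consists of the unknowns whose de Rham moments on all lower-dimensional entities vanish, i.e. the higher-order and complementary components attached to edges, faces and cells. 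I would establish its exactness by a local-to-global argument, decomposing $\underline{K}^{\bullet}$ according to the generating entity and using that each edge, face and element is \emph{contractible}: on every such entity the relevant local DDR sequence is exact by \cite[Theorems 1 and 2]{Di-Pietro.Droniou:21*2}, which furnishes local primitives that can subsequently be assembled into a global one.

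\emph{Main obstacle.} The delicate point is precisely this acyclicity: passing from per-entity exactness (valid because entities have trivial topology) to exactness of the assembled complex $\underline{K}^{\bullet}$ requires controlling how the interior and complementary degrees of freedom attached to entities of different dimension interact across shared boundaries, and checking that the local primitives produced on elements, faces and edges are compatible enough to glue into a global section without violating the kernel conditions. This is the discrete counterpart of a Mayer--Vietoris / double-complex argument, and organising the induction over the skeleton dimension --- while keeping track of the trace and complementary projectors --- is where I expect the genuine difficulty to lie. Once acyclicity is secured the conclusion is immediate: $\underline{\Phi}^{\bullet}$ is a quasi-isomorphism, so $\coH{i,(k)}\cong \coH{i}$ for all $i$, with $\dim\coH{i,(k)}=b_i$, consistently with the trivial-topology case of \cite{Di-Pietro.Droniou:21*2}.
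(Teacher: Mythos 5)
Your overall architecture is sound and, once unpacked, is essentially the paper's own argument in different packaging. The paper does not map $\DDR{k}$ directly onto the cellular cochain complex: it first reduces $\DDR{k}$ to $\DDR{0}$, identifies $\DDR{0}$ with the cellular cochain complex via discrete de Rham maps (Lemma \ref{lem:cohomology:DDR0}), and obtains the cohomology isomorphism from the algebraic framework of Theorem \ref{thm:framework}, i.e.\ from explicit extension maps that are cochain right-inverses of the reductions together with conditions \textbf{(C1)}--\textbf{(C3)}, rather than from a short/long exact sequence. Your $\underline{\Phi}^{\bullet}$ is precisely the composite of the paper's reductions with that identification, and your ``acyclicity of $\underline{K}^{\bullet}$'' is literally the exactness of the zero-reduction subcomplex \eqref{eq:ddrZR.complex}, which the paper points out is an equivalent formulation of its key Lemma \ref{lem:propC2}. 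So the two routes prove the same thing; what the paper's explicit extensions buy in addition is a cheap, constructive description of cohomology generators (Remark \ref{rem:generators}), which a purely homological quasi-isomorphism argument does not supply.

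Two points need repair. First, $\underline{\Phi}^{1}$ and $\underline{\Phi}^{2}$ must be built from the polynomial components, $\underline{\Phi}^{1}(\uvec{v}_h)(E)=\int_E v_E$ and $\underline{\Phi}^{2}(\uvec{w}_h)(F)=\int_F w_F$, not from traces of the reconstructions $\Pcurl{k}$ and $\Pdiv{k}$: these reconstructions are element-wise polynomials whose traces on shared edges and faces are not single-valued, so your cochain would be ill-defined, and the Stokes identities you invoke hold ``by construction'' only for the component-based definition (e.g.\ test \eqref{eq:GE} with $r_E=1$). Second, and more seriously, the acyclicity of $\underline{K}^{\bullet}$ cannot be deferred: it is the entire mathematical content of the theorem, and your sketch stops exactly where the difficulty begins. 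Local exactness on each $T$ yields, for a kernel cocycle, one local primitive per element, but these primitives do not automatically agree on shared vertices, edges and faces; one must exploit the gauge freedom (adding $\Igrad{k}{T}C_T$, resp.\ $\uGT{k}\underline{q}_T^T$, to the local primitives) to normalise them entity by entity. In the divergence stage this requires first renormalising the edge components to have zero mean (the paper needs a separate auxiliary result, Proposition \ref{eq:pi0.CF=0}, for this) and then solving face-by-face matching conditions using that $\DIV_F:\cRoly{k}(F)\to\Poly{k-1}(F)$ is an isomorphism; one must also verify that the glued primitive still has vanishing lowest-order moments, i.e.\ still lies in $\underline{K}^{\bullet}$, which is what actually closes the long-exact-sequence argument. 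Until that gluing is carried out, the proposal is a correct plan rather than a proof.
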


In the context of Finite Element methods on standard meshes (interpreted in the exterior calculus framework), this isomorphism is proved in \cite[Section 5.5]{Arnold.Falk.ea:06}. Some polytopal methods are covered by \cite[Proposition 5.16]{Christiansen.Munthe-Kaas.ea:11}, but in a theoretical framework that does not describe \emph{computable} polytopal methods, and does not immediately give a practical way to compute generators of the discrete cohomology groups. To the best of our knowledge, Theorem \ref{thm:ddr:cohomology} is the first of its kind for an arbitrary-order fully computable polytopal complex; moreover, the technique of proof we develop provides an explicit and cheap way to describe the cohomology spaces (see Remark \ref{rem:generators} at the end of the paper).

In the case of a topologically trivial domain, Theorem \ref{thm:ddr:cohomology} also establishes the exactness of the global DDR complex in a more straightforward way than the previous proofs in \cite{Di-Pietro.Droniou:21*1,Di-Pietro.Droniou:21*2}, using only the local exactness in each element.
In a nutshell, the idea of the proof of Theorem \ref{thm:ddr:cohomology} consists in showing that, for any $k\ge 1$, the cohomology spaces \eqref{eq:cohomology.spaces:discrete} are isomorphic to $\coH{0,(0)}$, $\coH{1,(0)}$, $\coH{2,(0)}$, and $\coH{3,(0)}$, and, connecting through the de Rham map the DDR(0) complex to the CW complex defined by the mesh, that these spaces are in turn isomorphic to the de Rham cohomology spaces \eqref{eq:cohomology.spaces}.
This is done using the abstract framework originally introduced in \cite[Section 2]{Di-Pietro.Droniou:22} in the context of serendipity DDR methods.
In passing, as a consequence of Theorem \ref{thm:ddr:cohomology} and of the discussion in \cite[Section 6.6]{Di-Pietro.Droniou:22}, we immediately have also the following result:

\begin{corollary}[Cohomology of the serendipity DDR complex]
  The cohomology spaces of the serendipity DDR complex presented in \cite[Section 5]{Di-Pietro.Droniou:22} are isomorphic to the de Rham cohomology spaces \eqref{eq:cohomology.spaces}.
\end{corollary}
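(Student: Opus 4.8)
The plan is to derive the corollary by chaining two isomorphisms, one of which is Theorem \ref{thm:ddr:cohomology} and the other of which is already available in the literature. Indeed, Theorem \ref{thm:ddr:cohomology} identifies the cohomology of the standard $\DDR{k}$ complex \eqref{eq:ddr.complex} with the de Rham cohomology spaces \eqref{eq:cohomology.spaces}. Hence it suffices to show that the cohomology spaces of the serendipity DDR complex of \cite[Section 5]{Di-Pietro.Droniou:22} are isomorphic to those of the standard $\DDR{k}$ complex; the conclusion then follows by transitivity of isomorphisms.

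For this second isomorphism I would invoke \cite[Section 6.6]{Di-Pietro.Droniou:22} directly. The serendipity construction links the two complexes through reduction and extension operators that, being built to be consistent with the discrete differentials, assemble into cochain maps in both directions. The abstract framework of \cite[Section 2]{Di-Pietro.Droniou:22} --- the very machinery underlying the proof of Theorem \ref{thm:ddr:cohomology} --- provides sufficient conditions for such a pair of cochain maps to induce mutually inverse isomorphisms on cohomology. Verifying these conditions amounts to checking the compatibility of the serendipity operators with the complex structure in each degree, which is exactly the content recorded in \cite[Section 6.6]{Di-Pietro.Droniou:22}.

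Because both ingredients are in place, I anticipate no substantive obstacle: the argument is a formal composition of isomorphisms, which is why the statement can be advertised as an immediate consequence. The single point meriting attention is to confirm that passing to the smaller serendipity spaces neither creates nor destroys cohomology classes, i.e.\ that the serendipity reduction preserves all cohomological information. This is guaranteed by the commutation of the reduction and extension operators with the differentials, so that the maps they induce on the cohomology quotients are well defined and mutually inverse; no new estimate or construction beyond those of \cite{Di-Pietro.Droniou:22} is required.
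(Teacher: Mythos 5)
Your proposal matches the paper's argument exactly: the corollary is obtained by composing the isomorphism of Theorem \ref{thm:ddr:cohomology} with the isomorphism between the cohomologies of the serendipity and standard $\DDR{k}$ complexes already established in \cite[Section 6.6]{Di-Pietro.Droniou:22} via the reduction/extension framework of \cite[Section 2]{Di-Pietro.Droniou:22}. This is precisely why the paper presents the result as an immediate consequence, so nothing further is needed.
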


The rest of this work is organised as follows.
In Section \ref{sec:setting} we introduce the general setting (mesh, polynomial spaces, etc.).
In Section \ref{sec:ddr.complex} we briefly recall the DDR complex of \cite{Di-Pietro.Droniou:21*2}.
Section \ref{sec:cohomology} contains the proof of Theorem \ref{thm:ddr:cohomology},
and Section \ref{sec:conclusion} provides a brief conclusion and perspectives to this work.


\section{Setting}\label{sec:setting}

\subsection{Domain and mesh}

Denote by $\Omega\subset\Real^3$ a connected polyhedral domain.
We consider a polyhedral mesh $\Mh\coloneq\Th\cup\Fh\cup\Eh\cup\Vh$, where $\Th$ gathers the elements, $\Fh$ the faces, $\Eh$ the edges, and $\Vh$ the vertices. Both elements and faces are assumed to be topologically trivial.
The notations and assumptions are as in \cite{Di-Pietro.Droniou:21*2}. In particular, for each face $F\in\Fh$, we fix a unit normal $\normal_F$ to $F$ and, for each edge $E\in\Eh$, a unit tangent $\tangent_E$. Given $T\in\Th$, $\FT$ gathers the faces on the boundary $\partial T$ of $T$ and $\ET$ the edges in $\partial T$. For $F\in\FT$, $\omega_{TF}\in\{-1,+1\}$ is such that $\omega_{TF}\normal_F$ is the outer normal on $F$ to $T$.

Each face $F\in\Fh$ is oriented counter-clockwise with respect to $\normal_F$ and, for $E\in\EF$ with $\EF$ set of edges of $F$, we let $\omega_{FE}\in\{-1,+1\}$ be such that $\omega_{FE}=+1$ if $\tangent_E$ points along the boundary $\partial F$ of $F$ in the clockwise sense, and $\omega_{FE}=-1$ otherwise; we also denote by $\normal_{FE}$ the unit normal vector to $E$, in the plane spanned by $F$, such that $(\tangent_E,\normal_{FE},\normal_F)$ is a right-handed system of coordinate; it can be checked that $\omega_{FE}\normal_{FE}$ points outside $F$.
For all $V\in\Vh$, $\bvec{x}_V\in\Real^3$ denotes the coordinate vector of $V$.
For any mesh face $F\in\Fh$, we denote by $\GRAD_F$ and $\DIV_F$ the tangent gradient and divergence operators acting on smooth enough functions.
Moreover, for any $r:F\to\Real$ and $\bvec{z}:F\to\Real^2$ smooth enough, we let
$\VROT_F r\coloneq (\GRAD_F r)^\perp$ and
$\ROT_F\bvec{z}=\DIV_F(\bvec{z}^\perp)$,
with $\perp$ denoting the rotation of angle $-\frac\pi2$ in the oriented tangent space to $F$.

The mesh we consider is such that $(\Th,\Fh)$ belongs to a regular sequence as per \cite[Definition 1.9]{Di-Pietro.Droniou:20}.
This assumption ensures the existence, for each $\sfP\in\Th\cup\Fh\cup\Eh$, of a point $\bvec{x}_{\sfP}\in\sfP$ such that a ball centered at $\bvec{x}_\sfP$ and of radius uniformly comparable to the diameter of $\sfP$ is contained in $\sfP$.

\subsection{Polynomial spaces and $L^2$-orthogonal projectors}

For any $\sfP\in\Mh$ and an integer $\ell\ge 0$, we denote by $\Poly{\ell}(\sfP)$ the space spanned by the restriction to $\sfP$ of three-variate polynomial functions of total degree $\le \ell$, and by $\Poly{0,\ell}(\sfP)$ its subspace made of polynomials whose integral over $\sfP$ vanishes.
Following standard conventions on degrees of polynomials, we moreover set $\Poly{-1}(\sfP)=\{0\}$.
For $F\in\Fh$, $\vPoly{\ell}(F)$ (boldface) is the space of vector-valued polynomials of degree $\le \ell$ on $F$, that are tangent to the face.
For $T\in\Th$, on the other hand, we set $\vPoly{\ell}(T)\coloneq\Poly{\ell}(T)^3$.
We note the following decompositions of the above spaces of vector-valued polynomials (see \cite[Eq.~(3.11)]{Arnold.Falk.ea:06} in the context of polynomial differential forms, and \cite[Table 6.1]{Arnold:18} for the translation in terms of vector proxies):
For all $F\in\Fh$,
\begin{equation*}
  \begin{aligned}
    \vPoly{\ell}(F)
    &= \Goly{\ell}(F) \oplus \cGoly{\ell}(F)
    \quad\text{
      with $\Goly{\ell}(F)\coloneq\GRAD_F\Poly{\ell+1}(F)$
      and $\cGoly{\ell}(F)\coloneq(\bvec{x}-\bvec{x}_F)^\perp\Poly{\ell-1}(F)$
    }
    \\
    &= \Roly{\ell}(F) \oplus \cRoly{\ell}(F)
    \quad\text{
      with $\Roly{\ell}(F)\coloneq\VROT_F\Poly{\ell+1}(F)$
      and $\cRoly{\ell}(F)\coloneq(\bvec{x}-\bvec{x}_F)\Poly{\ell-1}(F)$
    }
  \end{aligned}
\end{equation*}
and, for all $T\in\Th$,
\begin{equation*}
  \begin{aligned}
    \vPoly{\ell}(T)
    &= \Goly{\ell}(T) \oplus \cGoly{\ell}(T)
    \quad\text{
      with $\Goly{\ell}(T)\coloneq\GRAD\Poly{\ell+1}(T)$
      and $\cGoly{\ell}(T)\coloneq(\bvec{x}-\bvec{x}_T)\times \vPoly{\ell-1}(T)$
    }
    \\
    &= \Roly{\ell}(T) \oplus \cRoly{\ell}(T)
    \quad\text{
      with $\Roly{\ell}(T)\coloneq\CURL\Poly{\ell+1}(T)$
      and $\cRoly{\ell}(T)\coloneq(\bvec{x}-\bvec{x}_T)\Poly{\ell-1}(T)$.
    }
  \end{aligned}
\end{equation*}

Given a polynomial (sub)space $\mathcal{X}^\ell(\sfP)$ on $\sfP\in\Mh$, the corresponding $L^2$-orthogonal projector is denoted by $\pi_{\mathcal{X},\sfP}^\ell$, and boldface fonts will be used when the elements of $\mathcal{X}^\ell(\sfP)$ are vector-valued.
Similarly, when considering a complement $\cvec{X}^{\compl,\ell}(\sfP)$ with $\cvec{X}\in\{\cvec{G},\cvec{R}\}$, $\bvec{\pi}_{\cvec{X},\sfP}^{\compl,\ell}$ denotes the $L^2$-orthogonal projector on $\cvec{X}^{\compl,\ell}(\sfP)$.


\section{DDR complex}\label{sec:ddr.complex}

In this section we briefly recall the DDR complex for a generic polynomial degree $k\ge 0$.
  A complete exposition can be found in \cite[Section 3]{Di-Pietro.Droniou:21*2}; see also \cite{Bonaldi.Di-Pietro.ea:23}, where a more compact and general presentation based on differential forms is provided.
  At the start of Section \ref{sec:cohomology:DDR0}, we present the special case of the lowest-order degree $k=0$, which can be described using simpler formulas.

\subsection{Spaces}
The DDR counterparts of $H^1(\Omega)$, $\Hcurl{\Omega}$, $\Hdiv{\Omega}$, and $L^2(\Omega)$ are respectively defined as follows:
\begin{subequations}\label{eq:ddr.spaces}
\begin{equation}\label{eq:Xgrad.h}
  \Xgrad{k}{h}\coloneq\Big\{
  \begin{aligned}[t]
    \underline{q}_h&=\big((q_T)_{T\in\Th},(q_F)_{F\in\Fh},(q_E)_{E\in\Eh},(q_V)_{V\in\Vh}\big)\st
    \\
    &\text{$q_T\in \Poly{k-1}(T)$ for all $T\in\Th$,
      $q_F\in\Poly{k-1}(F)$ for all $F\in\Fh$,}
    \\
    &\text{$q_E\in\Poly{k-1}(E)$ for all $E\in\Eh$,
      and $q_V\in\Real$ for all $V\in\Vh$}\Big\},
  \end{aligned}
\end{equation}
\begin{equation}\label{eq:Xcurl.h}
  \Xcurl{k}{h}\coloneq\Big\{
  \begin{aligned}[t]
    \uvec{v}_h
    &=\big(
    (\bvec{v}_{\cvec{R},T},\bvec{v}_{\cvec{R},T}^\compl)_{T\in\Th},(\bvec{v}_{\cvec{R},F},\bvec{v}_{\cvec{R},F}^\compl)_{F\in\Fh}, (v_E)_{E\in\Eh}
    \big)\st
    \\
    &\qquad\text{$\bvec{v}_{\cvec{R},T}\in\Roly{k-1}(T)$ and $\bvec{v}_{\cvec{R},T}^\compl\in\cRoly{k}(T)$ for all $T\in\Th$,}
    \\
    &\qquad\text{$\bvec{v}_{\cvec{R},F}\in\Roly{k-1}(F)$ and $\bvec{v}_{\cvec{R},F}^\compl\in\cRoly{k}(F)$ for all $F\in\Fh$,}
    \\
    &\qquad\text{and $v_E\in\Poly{k}(E)$ for all $E\in\Eh$}\Big\},
  \end{aligned}
\end{equation}
\begin{equation}\label{eq:Xdiv.h}
  \Xdiv{k}{h}\coloneq\Big\{
  \begin{aligned}[t]
    \uvec{w}_h
    &=\big((\bvec{w}_{\cvec{G},T},\bvec{w}_{\cvec{G},T}^\compl)_{T\in\Th}, (w_F)_{F\in\Fh}\big)\st
    \\
    &\qquad\text{$\bvec{w}_{\cvec{G},T}\in\Goly{k-1}(T)$ and $\bvec{w}_{\cvec{G},T}^\compl\in\cGoly{k}(T)$ for all $T\in\Th$,}
    \\
    &\qquad\text{and $w_F\in\Poly{k}(F)$ for all $F\in\Fh$}
    \Big\},
  \end{aligned}
\end{equation}
and
\begin{equation}\label{eq:PkTh}
\Poly{k}(\Th)\coloneq\left\{
r_h\in L^2(\Omega)\st\text{$(r_h)_{|T}\in\Poly{k}(T)$ for all $T\in\Th$}
\right\}.
\end{equation}
\end{subequations}
The restrictions of the above spaces and of their elements to a mesh entity $\sfP\in\Mh$ are obtained gathering the polynomial components on $\sfP$ and its boundary, and will be denoted replacing the subscript $h$ by $\sfP$.
So, for example, $\Xgrad{k}{E}$, $\Xgrad{k}{F}$ and $\Xgrad{k}{T}$ are, respectively, the restrictions of $\Xgrad{k}{h}$ to an edge $E\in\Eh$, a face $F\in\Fh$ or an element $T\in\Th$. Similarly, $\Xcurl{k}{F}$ and $\Xcurl{k}{T}$ are the restrictions of $\Xcurl{k}{h}$ to a face $F\in\Fh$ or an element $T\in\Th$.
Following the usual DDR notations, underlined objects represent spaces or vectors having polynomial components, and we use boldface for vector-valued polynomial functions or operators. The sans-serif font is used for ``complete'' differential operators, that only appear in the DDR complex through projections on particular polynomial spaces.

\subsection{Discrete vector calculus operators and potentials}

\subsubsection{Gradient}

For any $E\in\Eh$, the \emph{edge scalar trace} $\trE:\Xgrad{k}{E}\to\Poly{k+1}(E)$ is such that, for all $\underline{q}_E\in\Xgrad{k}{E}$, $\trE\underline{q}_E$ is the unique polynomial in $\Poly{k+1}(E)$ that takes the value $q_V$ in each vertex $V$ of $E$ and satisfies $\lproj{k-1}{E}\trE\underline{q}_E = q_E$.
The \emph{edge gradient} $\GE{k}:\Xgrad{k}{E}\to\Poly{k}(E)$ is defined as:
For all $\underline{q}_E\in\Xgrad{k}{E}$,
\begin{equation}\label{eq:GE}
\int_E \GE{k}\underline{q}_E r_E = -\int_E q_E r_E' +q_{V_2}r_E(\bvec{x}_{V_2})-q_{V_1}r_E(\bvec{x}_{V_1})\qquad\forall r_E\in\Poly{k}(E),
\end{equation}
where $V_1,V_2$ are the two vertices of $E$ numbered according to $\tangent_E$. 

\begin{remark}[Definition of the edge trace and gradient]
  With the notation $q_{\Eh}$ used, e.g., in \cite{Di-Pietro.Droniou:21*1}, we have $\trE\underline{q}_E=(q_{\Eh})_{|E}$.
  Moreover, it is a simple matter to check that $\GE{k}\underline{q}_E=(\trE\underline{q}_E)'$, with the derivative taken in the direction of $\tangent_E$.
\end{remark}

For any $F\in\Fh$, the \emph{face gradient} $\cGF{k}:\Xgrad{k}{F}\to\vPoly{k}(F)$ and the \emph{face scalar trace} $\trF:\Xgrad{k}{F}\to\Poly{k+1}(F)$ are such that, for all $\underline{q}_F\in\Xgrad{k}{F}$,
\begin{equation}\label{eq:cGF}
  \int_F\cGF{k}\underline{q}_F\cdot\bvec{v}_F
  = -\int_F q_F\DIV_F\bvec{v}_F
  + \sum_{E\in\EF}\omega_{FE}\int_E \trE\underline{q}_E~(\bvec{v}_F\cdot\normal_{FE})
  \qquad\forall\bvec{v}_F\in\vPoly{k}(F)
\end{equation}
and
\begin{equation*}
  \int_F\trF\underline{q}_F\DIV_F\bvec{v}_F
  = -\int_F\cGF{k}\underline{q}_F\cdot\bvec{v}_F
  + \sum_{E\in\EF}\omega_{FE}\int_E \trE\underline{q}_E~(\bvec{v}_F\cdot\normal_{FE})
  \qquad\forall\bvec{v}_F\in\cRoly{k+2}(F).
\end{equation*}

For all $T\in\Th$, the \emph{element gradient} $\cGT{k}:\Xgrad{k}{T}\to\vPoly{k}(T)$ is defined such that, for all $\underline{q}_T\in\Xgrad{k}{T}$,
\begin{equation}\label{eq:cGT}
  \int_T\cGT{k}\underline{q}_T\cdot\bvec{v}_T
  = -\int_T q_T\DIV\bvec{v}_T
  + \sum_{F\in\FT}\omega_{TF}\int_F\trF\underline{q}_F~(\bvec{v}_T\cdot\normal_F)
  \qquad\forall\bvec{v}_T\in\vPoly{k}(T).
\end{equation}

Finally, the \emph{global discrete gradient} $\uGh{k}:\Xgrad{k}{h}\to\Xcurl{k}{h}$ is obtained collecting the projections of local gradients on the polynomial components of $\Xcurl{k}{h}$: For all $\underline{q}_h\in\Xgrad{k}{h}$,
\begin{equation}\label{eq:uGh}
  \uGh{k}\underline{q}_h\coloneq
  \big(
  ( \Rproj{k-1}{T}\cGT{k}\underline{q}_T,\Rcproj{k}{T}\cGT{k}\underline{q}_T )_{T\in\Th},
  ( \Rproj{k-1}{F}\cGF{k}\underline{q}_F,\Rcproj{k}{F}\cGF{k}\underline{q}_F )_{F\in\Fh},
  ( \GE{k} q_E )_{E\in\Eh}
  \big).
\end{equation}

\subsubsection{Curl}

For all $F\in\Fh$, the \emph{face curl} $\CF{k}:\Xcurl{k}{F}\to\Poly{k}(F)$ and the face tangential trace $\trFt{k}:\Xcurl{k}{F}\to\vPoly{k}(F)$ are such that, for all $\uvec{v}_F\in\Xcurl{k}{F}$,
\begin{equation}\label{eq:CF}
  \int_F\CF{k}\uvec{v}_F~r_F
  = \int_F\bvec{v}_{\cvec{R},F}\cdot\VROT_F r_F
  - \sum_{E\in\EF}\omega_{FE}\int_E v_E~r_F
  \qquad\forall r_F\in\Poly{k}(F)
\end{equation}
and
\begin{multline*}
  \int_F\trFt{k}\uvec{v}_F\cdot(\VROT_F r_F + \bvec{w}_F)
  = \int_F\CF{k}\uvec{v}_F~r_F
  + \sum_{E\in\EF}\omega_{FE}\int_E v_E r_F
  + \int_F\bvec{v}_{\cvec{R},F}^\compl\cdot\bvec{w}_F
  \\
  \forall(r_F,\bvec{w}_F)\in\Poly{0,k+1}(F)\times\cRoly{k}(F).
\end{multline*}

For all $T\in\Th$, the \emph{element curl} $\cCT{k}:\Xcurl{k}{T}\to\vPoly{k}(T)$ and the \emph{vector potential reconstruction} $\Pcurl{k}:\Xcurl{k}{T}\to\vPoly{k}(T)$ are defined such that, for all $\uvec{v}_T\in\Xcurl{k}{T}$,
\begin{equation}\label{eq:cCT}
  \int_T\cCT{k}\uvec{v}_T\cdot\bvec{w}_T
  = \int_T\bvec{v}_{\cvec{R},T}\cdot\CURL\bvec{w}_T
  + \sum_{F\in\FT}\omega_{TF}\int_F\trFt{k}\uvec{v}_F\cdot(\bvec{w}_T\times\normal_F)
  \qquad\forall\bvec{w}_T\in\vPoly{k}(T)
\end{equation}
and
\begin{multline*}
  \int_T\Pcurl{k}\uvec{v}_T\cdot(\CURL\bvec{w}_T + \bvec{z}_T)
  = \int_T\cCT{k}\uvec{v}_T\cdot\bvec{w}_T
  - \sum_{F\in\FT}\omega_{TF}\int_F\trFt{k}\uvec{v}_F\cdot(\bvec{w}_T\times\normal_F)
  + \int_T\bvec{v}_{\cvec{R},T}^\compl\cdot\bvec{z}_T
  \\
  \forall(\bvec{w}_T,\bvec{z}_T)\in\cGoly{k+1}(T)\times\cRoly{k}(T).
\end{multline*}

Finally, the \emph{global discrete curl} $\uCh{k}:\Xcurl{k}{h}\to\Xdiv{k}{h}$ is obtained setting, for all $\uvec{v}_h\in\Xcurl{k}{h}$,
\begin{equation}\label{eq:uCh}
  \uCh{k}\uvec{v}_h\coloneq\big(
  ( \Gproj{k-1}{T}\cCT{k}\uvec{v}_T,\Gcproj{k}{T}\cCT{k}\uvec{v}_T )_{T\in\Th},
  ( \CF{k}\uvec{v}_F )_{F\in\Fh}
  \big).
\end{equation}

\subsubsection{Divergence}

For all $T\in\Th$, the \emph{element divergence} $\DT{k}:\Xdiv{k}{T}\to\Poly{k}(T)$ and \emph{vector potential reconstruction} $\Pdiv{k}:\Xdiv{k}{T}\to\vPoly{k}(T)$ are defined by:
For all $\uvec{w}_T\in\Xdiv{k}{T}$,
\begin{equation}\label{eq:DT}
  \int_T\DT{k}\uvec{w}_T~r_T
  = -\int_T\bvec{w}_{\cvec{G},T}\cdot\GRAD r_T
  + \sum_{F\in\FT}\!\omega_{TF}\!\int_F w_F r_T
  \qquad\forall r_T\in\Poly{k}(T)
\end{equation}
and
\begin{multline*}
  \int_T\Pdiv{k}\uvec{w}_T\cdot(\GRAD r_T + \bvec{z}_T)
  = -\int_T\DT{k}\uvec{w}_T~r_T
  + \sum_{F\in\FT}\omega_{TF}\int_Fw_F~r_T
  + \int_T\bvec{w}_{\cvec{G},T}^\compl\cdot\bvec{z}_T
  \\
  \forall(r_T,\bvec{z}_T)\in\Poly{0,k+1}(T)\times\cGoly{k}(T).
\end{multline*}

The \emph{global discrete divergence} $\Dh{k}:\Xdiv{k}{h}\to\Poly{k}(\Th)$ is obtained setting, for all $\uvec{w}_h\in\Xdiv{k}{h}$,
\begin{equation}\label{eq:Dh}
  (\Dh{k}\uvec{w}_h)_{|T}\coloneq\DT{k}\uvec{w}_T\qquad\forall T\in\Th,
\end{equation}

\subsection{DDR complex}

The definition of the DDR complex \eqref{eq:ddr.complex} is completed setting, for all $q:\Omega\to\Real$ smooth enough,
\begin{equation}\label{eq:Igradh}
  \Igrad{k}{h} q\coloneq\big(
  (\lproj{k-1}{T} q_{|T})_{T\in\Th},
  (\lproj{k-1}{F} q_{|F})_{F\in\Fh},
  (\lproj{k-1}{E} q_{|E})_{E\in\Eh},
  (q(\bvec{x}_V))_{V\in\Vh}
  \big).
\end{equation}
  A synopsis of the definitions of the DDR spaces and operators is provided in Table \ref{tab:definitions}.
  \begin{table}\centering
    \begin{tabular}{cc|cc}
      \toprule
      DDR operator & Definition & DDR space & Definition \\
      \midrule
      $\Igrad{k}{h}$ & \eqref{eq:Igradh} & $\Xgrad{k}{h}$ & \eqref{eq:Xgrad.h} \\
      $\uGh{k}$ & \eqref{eq:uGh} & $\Xcurl{k}{h}$ & \eqref{eq:Xcurl.h} \\
      $\uCh{k}$ & \eqref{eq:uCh} & $\Xdiv{k}{h}$ & \eqref{eq:Xdiv.h} \\
      $\Dh{k}$ & \eqref{eq:Dh} & $\Poly{k}(\Th)$ & \eqref{eq:PkTh} \\ 
      \bottomrule
    \end{tabular}
    \caption{Definitions of the spaces and operators appearing in the DDR complex \eqref{eq:ddr.complex}.\label{tab:definitions}}
  \end{table}


\section{Cohomology of the DDR complex}\label{sec:cohomology}

This section contains the proof Theorem \ref{thm:ddr:cohomology} preceded by some preliminary results.

\subsection{Cohomology of the $\DDR{0}$ complex}\label{sec:cohomology:DDR0}

To make the analysis of the cohomology of \DDR{0} easier to read, we explicitly write here the definitions of the spaces and operators in this case.
For $k=0$, the gradient space only has scalar unknowns on the vertices, the curl space on the edges, the divergence space on the faces, and the $L^2$-like space on the elements.
All these spaces can therefore be easily identified as follows:
\begin{subequations}\label{eq:ddr.spaces.k=0}
\begin{equation}\label{eq:Xgrad.h.0}
  \Xgrad{0}{h}=\Big\{\underline{q}_h=(q_V)_{V\in\Vh}\st q_V\in\Real\text{ for all }V\in\Vh\Big\}\simeq \Real^{\Vh},
\end{equation}
\begin{equation}\label{eq:Xcurl.h.0}
  \Xcurl{0}{h}=\Big\{\uvec{v}_h=(v_E)_{E\in\Eh}\st v_E\in\Real\text{ for all }E\in\Eh\Big\}\simeq \Real^{\Eh},
\end{equation}
\begin{equation}\label{eq:Xdiv.h.0}
  \Xdiv{0}{h}=\Big\{\uvec{w}_h= (w_F)_{F\in\Fh}\st w_F\in\Real\text{ for all }F\in\Fh\Big\}\simeq \Real^{\Fh},
\end{equation}
\begin{equation}\label{eq:P0Th}
\Poly{0}(\Th)=\left\{
r_h\in L^2(\Omega)\st (r_h)_{|T}\equiv r_T\in\Real\mbox{ for all } T\in\Th
\right\}\simeq \Real^{\Th}.
\end{equation}
\end{subequations}
The differential operators are linear combinations of values on the boundary of the relevant mesh entities:
For all $(\underline{q}_h,\uvec{v}_h,\uvec{w}_h)\in\Xgrad{0}{h}\times\Xcurl{0}{h}\times\Xdiv{0}{h}$,
\begin{subequations}\label{eq:ddr.operators.k=0}
\begin{alignat}{5}
  \uGh{0}\underline{q}_h
  &=(\GE{0}\underline{q}_E)_{E\in\Eh}
  &&\quad\mbox{with }
  \GE{0}\underline{q}_E
  =\frac{q_{V_2}-q_{V_1}}{|E|}
  \quad\forall E\in\Eh,\label{eq:def.uGh}
  \\
  \uCh{0}\uvec{v}_h
  &=(\CF{0}\uvec{v}_F)_{F\in\Fh}
  &&\quad\mbox{with }
  \CF{0}\uvec{v}_F
  =-\frac{1}{|F|}\sum_{E\in\EF}\omega_{FE}|E|v_E
  \quad\forall F\in\Fh,
  \\
  \Dh{0}\uvec{w}_h
  &=(\DT{0}\uvec{w}_T)_{T\in\Th}
  &&\quad\mbox{with }
  \DT{0}\uvec{w}_T
  =\frac{1}{|T|}\sum_{F\in\EF}\omega_{TF}|F|w_F
  \quad\forall T\in\Th.
\end{alignat}
\end{subequations}
Above, $|\sfP|$ represents the Hausdorff measure of the mesh entity $\sfP\in\Mh$ (its length for an edge, area for a face, volume for an element) and, in \eqref{eq:def.uGh}, we have numbered the vertices $V_1,V_2$ of $E$ so that $\tangent_E$ points from $V_1$ to $V_2$.

\begin{lemma}[Cohomology of the $\DDR{0}$ complex]\label{lem:cohomology:DDR0}
  The cohomology spaces defined by \eqref{eq:cohomology.spaces:discrete} with $k=0$ are isomorphic to the de Rham cohomology spaces \eqref{eq:cohomology.spaces}.
\end{lemma}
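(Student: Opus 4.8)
The plan is to recognise the $\DDR{0}$ complex, under the identifications \eqref{eq:ddr.spaces.k=0}, as a diagonal rescaling of the cellular cochain complex of the CW complex defined by the mesh $\Mh$, and then to invoke the de Rham theorem to pass from cellular to de Rham cohomology. Since $\Omega$ is connected and its elements and faces are topologically trivial, $\Mh$ is a regular CW decomposition of $\Omega$; integrating a smooth $i$-form over the (oriented) $i$-cells yields the de Rham map, which is a cochain map from the continuous de Rham complex \eqref{eq:de-rham} to the cellular cochain complex
\[
  \Real^{\Vh} \xrightarrow{\ d^0\ } \Real^{\Eh} \xrightarrow{\ d^1\ } \Real^{\Fh} \xrightarrow{\ d^2\ } \Real^{\Th},
\]
whose coboundary operators are given by the incidence numbers $\varepsilon_{\sfP'\sfP}\in\{-1,0,+1\}$ of pairs of cells differing by one in dimension. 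By the de Rham theorem \cite{Dodziuk:76} this map induces an isomorphism between $\coH{i}$ and the cellular cohomology in degree $i$, once the latter is reduced by the augmentation $\Real\to\Real^{\Vh}$ sending a constant to the vector of its values at the vertices; this augmentation accounts exactly for $\Image\mathfrak I$ on the continuous side and for $\Image\Igrad{0}{h}$ on the discrete side.

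First I would rewrite the $\DDR{0}$ differentials \eqref{eq:ddr.operators.k=0} in terms of the cellular coboundary operators. Introducing the diagonal isomorphisms
\[
  \phi_0 = \Id_{\Real^{\Vh}},\quad
  \phi_1 = \mathrm{diag}(|E|)_{E\in\Eh},\quad
  \phi_2 = \mathrm{diag}(|F|)_{F\in\Fh},\quad
  \phi_3 = \mathrm{diag}(|T|)_{T\in\Th},
\]
a direct comparison of \eqref{eq:def.uGh} and the two subsequent formulas with the cellular coboundaries shows that, with the correct matching of signs, $\phi_{i+1}$ intertwines the $i$-th $\DDR{0}$ operator with $d^i$, i.e.\ $\phi_1\circ\uGh{0}=d^0$, $\phi_2\circ\uCh{0}=d^1\circ\phi_1$, and $\phi_3\circ\Dh{0}=d^2\circ\phi_2$. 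Consequently the $\phi_i$ form an isomorphism of (augmented) cochain complexes, so the $\DDR{0}$ cohomology spaces $\coH{i,(0)}$ are isomorphic to the reduced cellular cohomology spaces, hence to $\coH{i}$ by the previous paragraph. In particular $\coH{0,(0)}\simeq\coH{0}=\{0\}$ (connectedness) and $\coH{3,(0)}\simeq\coH{3}=\{0\}$ (surjectivity of $\DIV$, equivalently $b_3=0$), while $\coH{1,(0)}$ and $\coH{2,(0)}$ have dimensions $b_1$ and $b_2$.

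The main obstacle is purely the bookkeeping of orientations: one must check that the mesh signs $\omega_{FE}$ and $\omega_{TF}$ fixed in Section \ref{sec:setting}, together with the global signs and the factors $|E|,|F|,|T|$ appearing in \eqref{eq:ddr.operators.k=0}, reproduce exactly the incidence numbers $\varepsilon_{FE}$ and $\varepsilon_{TF}$ of the CW complex under the chosen cell orientations. Concretely, for the middle two squares this amounts to verifying $\varepsilon_{FE}=-\omega_{FE}$ and $\varepsilon_{TF}=\omega_{TF}$ after absorbing the measures into the $\phi_i$, which follows from the right-handed and counter-clockwise conventions recalled in Section \ref{sec:setting} (the sign for $d^0$ being automatic once $\tangent_E$ orients $E$ from $V_1$ to $V_2$). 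Once these sign identifications are in place, commutativity of the three squares is a one-line computation per square, and the result follows.
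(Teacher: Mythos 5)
Your proof is correct and follows essentially the same route as the paper: the diagonal rescalings $\phi_i$ you introduce are exactly the paper's discrete de Rham maps $\kappa_{\GRAD},\kappa_{\CURL},\kappa_{\DIV},\kappa_{\Poly{}}$ (identity on vertex values, multiplication by $|E|$, $|F|$, $|T|$ on the other components), yielding an isomorphism of augmented cochain complexes between $\DDR{0}$ and the cellular cochain complex of the mesh, after which the de Rham theorem concludes. The only cosmetic difference is that you spell out the sign bookkeeping ($\varepsilon_{FE}=-\omega_{FE}$, $\varepsilon_{TF}=\omega_{TF}$) that the paper dismisses as a trivial verification.
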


\begin{proof}
    The mesh $\Mh$ can be seen as a cellular (CW) complex with vertices in $\Vh$ as 0-cells, edges in $\Eh$ as 1-cells, faces in $\Fh$ as 2-cells, and elements in $\Th$ as 3-cells. With this interpretation, ``discrete de Rham'' maps create isomorphisms between the $\DDR{0}$ spaces (with first space $\Real$ instead of $\{0\}$). Specifically, we have the following diagram:
\begin{equation}\label{eq:DDR0.CW}
  \begin{tikzcd}
    \arrow[<->]{d}{\Id}\Real\arrow{r}{\Igrad{0}{h}} &[1.5em] \Xgrad{0}{h}
    \arrow{r}{\uGh{0}}\arrow{d}{\dRmgrad{h}} &[1.5em] \Xcurl{0}{h}
    \arrow{r}{\uCh{0}}\arrow{d}{\dRmcurl{h}} &[1.5em] \Xdiv{0}{h}
    \arrow{r}{\Dh{0}}\arrow{d}{\dRmdiv{h}} &[1.5em] \Poly{0}(\Th) 
    \arrow{d}{\dRmpoly{h}}\arrow{r} & 0
    \\[2em]
    \Real\arrow{r}{i_\Real} & \Vh^*
    \arrow{r}{\partial_0^*} & \Eh^*
    \arrow{r}{\partial_1^*} & \Fh^*
    \arrow{r}{\partial_2^*} & \Th^*
    \arrow{r} & 0,
  \end{tikzcd}
\end{equation}
in which $\Vh^*$, $\Eh^*$, $\Fh^*$, and $\Th^*$ denote the sets of dual vertices, edges, faces, and volumes, $\partial_i^*$ are the coboundary operators on the cochain complex, $i_\Real$ is the embedding of $\Real$ in $\Vh^*$ as the map $i_\Real(s)(V)=s$ for all $V\in\Vh$, and, recalling \eqref{eq:ddr.spaces.k=0}, the discrete de Rham maps are defined by: 
\begin{alignat*}{3}
\dRmgrad{h}(\underline{q}_h)(V)={}&q_V&\qquad&\forall \underline{q}_h\in\Xgrad{0}{h}\,,\forall V\in\Vh,\\ 
\dRmcurl{h}(\uvec{v}_h)(E)={}&|E|v_E&\qquad&\forall \uvec{v}_h\in\Xcurl{0}{h}\,,\forall E\in\Eh,\\ 
\dRmdiv{h}(\uvec{w}_h)(F)={}&|F|v_F&\qquad&\forall \uvec{w}_h\in\Xdiv{0}{h}\,,\forall F\in\Fh,\\
\dRmpoly{h}(r_h)(T)={}&|T|r_T&\qquad&\forall r_h\in\Poly{0}(\Th)\,,\forall T\in\Th.
\end{alignat*}
It is trivial to verify that these de Rham maps are isomorphisms, and that the diagram \eqref{eq:DDR0.CW} is commutative (the latter property simply consists in using \eqref{eq:ddr.operators.k=0} and translating the definitions of the coboundary operators). This proves that the $\DDR{0}$ complex is isomorphic to the cochain complex, and thus has the same cohomology spaces.
Since, in turn, these cohomology spaces are isomorphic to the de Rham cohomology spaces \eqref{eq:cohomology.spaces} (see, e.g., \cite{Warner:83}), this concludes the proof.
\end{proof}

\subsection{Framework for isomorphisms in cohomology}\label{sec:framework.cohomology}

The proof of Theorem \ref{thm:ddr:cohomology} hinges on the following result, which provides a simplified version of the framework developed in \cite{Di-Pietro.Droniou:22}.

\begin{theorem}[Complexes with isomorphic cohomologies]\label{thm:framework}
  Consider two sequences of spaces and operators $(\Xg{i},\dg{i})_i$ and $(\tXg{i},\tdg{i})_i$ connected by the graded maps $(\Rg{i})_i$ and $(\Eg{i})_i$, respectively called \emph{reductions} and \emph{extensions}, as depicted in the following diagram:
  \begin{equation}\label{eq:general.diagram}
    \begin{tikzcd}
      \cdots\arrow{r} & \Xg{i}
      \arrow{r}{\dg{i}}\arrow[bend left]{d}{\Rg{i}} &[3em] \Xg{i+1}
      \arrow{r}{}\arrow[bend left]{d}{\Rg{i+1}} & \cdots\\[2em]
      \cdots\arrow{r} & \tXg{i}
      \arrow{r}{\tdg{i}}\arrow[bend left]{u}{\Eg{i}} &[3em] \tXg{i+1}
      \arrow{r}{}\arrow[bend left]{u}{\Eg{i+1}} & \cdots
    \end{tikzcd}
  \end{equation}
  Further assume that $(\Xg{i},\dg{i})_i$ is a complex and that the following properties hold:
  \begin{itemize}
  \item[\textbf{(C1)}] The reductions are left-inverses of the extensions, i.e., $\Rg{i}\Eg{i}=\Id_{\tXg{i}}$ for all $i$;
  \item[\textbf{(C2)}] $(\Eg{i+1}\Rg{i+1}-\Id_{\Xg{i+1}})(\ker\dg{i+1})\subset\Image\dg{i}$ for all $i$;
  \item[\textbf{(C3)}] The extensions and reductions are cochain maps.
  \end{itemize}
  Then, $(\tXg{i},\tdg{i})_i$ is a complex and its cohomology is isomorphic to that of $(\Xg{i},\dg{i})_i$; more precisely, the extensions and reductions induce reciprocal isomorphisms in cohomology.
\end{theorem}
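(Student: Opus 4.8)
The plan is to proceed in three stages: first verify that $(\tXg{i},\tdg{i})_i$ is itself a complex; then observe that the cochain-map hypothesis \textbf{(C3)} makes the reductions and extensions descend to linear maps on cohomology; and finally use \textbf{(C1)} and \textbf{(C2)} to show that these two induced maps are mutually inverse.

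For the complex property of the lower row, I would exploit that the extensions are cochain maps. By \textbf{(C3)} we have $\dg{i}\Eg{i}=\Eg{i+1}\tdg{i}$, so
\begin{equation*}
  \Eg{i+2}\tdg{i+1}\tdg{i}
  =\dg{i+1}\Eg{i+1}\tdg{i}
  =\dg{i+1}\dg{i}\Eg{i}
  =0,
\end{equation*}
the last equality holding because $(\Xg{i},\dg{i})_i$ is a complex. Applying $\Rg{i+2}$ on the left and invoking \textbf{(C1)} in the form $\Rg{i+2}\Eg{i+2}=\Id$ then gives $\tdg{i+1}\tdg{i}=0$, so $(\tXg{i},\tdg{i})_i$ is a complex and its cohomology is well defined.

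Since $\Rg{i}$ and $\Eg{i}$ are cochain maps by \textbf{(C3)}, they send cocycles to cocycles and coboundaries to coboundaries, hence induce linear maps $\Rg{i}^*$ and $\Eg{i}^*$ between the two cohomologies; these are the candidate reciprocal isomorphisms. One composition is immediate: because $\Rg{i}\Eg{i}=\Id$ already at the level of cochains by \textbf{(C1)}, functoriality of the induced maps yields $\Rg{i}^*\Eg{i}^*=\Id$ on the cohomology of $(\tXg{i},\tdg{i})_i$.

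The crux is the reverse composition $\Eg{i}^*\Rg{i}^*=\Id$ on the cohomology of $(\Xg{i},\dg{i})_i$, and this is exactly where \textbf{(C2)} enters. Take a cocycle $x\in\Ker\dg{n}$ representing a class $[x]\in\Ker\dg{n}/\Image\dg{n-1}$; then $\Eg{n}^*\Rg{n}^*[x]=[\Eg{n}\Rg{n}x]$, so it suffices to check $\Eg{n}\Rg{n}x-x\in\Image\dg{n-1}$, which is precisely \textbf{(C2)} with $i=n-1$, namely $(\Eg{n}\Rg{n}-\Id)(\Ker\dg{n})\subset\Image\dg{n-1}$. Hence $[\Eg{n}\Rg{n}x]=[x]$ for every class and $\Eg{n}^*\Rg{n}^*=\Id$. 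The subtlety worth emphasising is that \textbf{(C2)} does \emph{not} provide a full cochain homotopy $\Eg{}\Rg{}-\Id=\dg{}s+s\dg{}$; it only controls $\Eg{}\Rg{}-\Id$ on the cocycles, but that is exactly and only what is needed to force the induced map to be the identity on cohomology. Combining the two composition identities, $\Rg{i}^*$ and $\Eg{i}^*$ are reciprocal isomorphisms, which is the assertion.
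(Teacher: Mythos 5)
Your proof is correct and follows essentially the same route as the paper: establish the complex property of the lower row from \textbf{(C1)} and \textbf{(C3)}, then use \textbf{(C1)} for $[\Rg{}][\Eg{}]=\Id$ and \textbf{(C2)} for $[\Eg{}][\Rg{}]=\Id$. Your computation of $\tdg{i+1}\tdg{i}=0$ (applying $\Rg{i+2}$ to $\Eg{i+2}\tdg{i+1}\tdg{i}=\dg{i+1}\dg{i}\Eg{i}=0$) is a slightly more streamlined version of the paper's identical idea.
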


\begin{proof}
  By \textbf{(C3)} and \textbf{(C1)} (applied to $i+1$) we have $\Rg{i+1}\dg{i}\Eg{i} = \Rg{i+1}\Eg{i+1}\tdg{i}=\tdg{i}$ for all $i$. We deduce from this fact that $(\tXg{i},\tdg{i})_i$ is a complex by invoking \textbf{(C3)} and the complex property of $(\Xg{i},\dg{i})_i$ to write
  \[
  \tdg{i+1}\tdg{i}=(\Rg{i+2}\underbrace{\dg{i+1}\Eg{i+1}}_{=\Eg{i+2}\tdg{i+1}})(\Rg{i+1}\dg{i}\Eg{i})=\Rg{i+2}\Eg{i+2}\underbrace{\tdg{i+1}\Rg{i+1}}_{=\Rg{i+2}\dg{i+1}}\dg{i}\Eg{i}=\Rg{i+2}\Eg{i+2}\Rg{i+2}\underbrace{\dg{i+1}\dg{i}}_{=0}\Eg{i}=0.
  \]

  Since extensions and reductions are cochain maps, they induce mappings $[\Eg{i}]$ and $[\Rg{i}]$ between the cohomology groups of the complexes. Property \textbf{(C1)} then gives $[\Id_{\tXg{i}}]=[\Rg{i}\Eg{i}]=[\Rg{i}][\Eg{i}]$. On the other hand, \textbf{(C2)} applied to $i$ instead of $i+1$ implies $[\Eg{i}\Rg{i}-\Id_{\Xg{i}}]=0$ and thus $[\Eg{i}][\Rg{i}]=[\Id_{\Xg{i}}]$, which concludes the proof that $[\Eg{i}]$ and $[\Rg{i}]$ are reciprocal isomorphisms.
\end{proof}

Lemma \ref{lem:cohomology:DDR0} proves that the cohomology of $\DDR{0}$ is isomorphic to the continuous de Rham cohomology. To prove Theorem \ref{thm:ddr:cohomology}, we therefore only need to show that, for $k\ge 1$, the cohomologies of $\DDR{k}$ and $\DDR{0}$ are isomorphic; this will be done by constructing reduction and extension maps between these two complexes and using Theorem \ref{thm:framework}. In Section \ref{sec:red.ext} we design these maps and show that they satisfy \textbf{(C1)} and \textbf{(C3)}, while Section \ref{sec:proof.C2} establishes \textbf{(C2)}.

\subsection{Reduction and extension cochain maps, and proofs of \textbf{(C1)} and \textbf{(C3)}}\label{sec:red.ext}

The specialisation of diagram \eqref{eq:general.diagram} linking the $\DDR{k}$ and $\DDR{0}$ complexes takes the following form:
\begin{equation}\label{eq:DDRk.DDR0}
\begin{tikzcd}
  \arrow[<->]{d}{\Id}\Real\arrow{r}{\Igrad{k}{h}} &[1em] \Xgrad{k}{h}
  \arrow{r}{\uGh{k}}\arrow[bend left]{d}{\Rgrad{h}} &[2em] \Xcurl{k}{h}
  \arrow{r}{\uCh{k}}\arrow[bend left]{d}{\Rcurl{h}} &[2em] \Xdiv{k}{h} 
  \arrow{r}{\Dh{k}}\arrow[bend left]{d}{\Rdiv{h}} &[1em] \Poly{k}(\Th) 
  \arrow[bend left]{d}{\lproj{0}{h}}\arrow{r} & 0
  \\[2em]
  \Real\arrow{r}{\Igrad{0}{h}} & \Xgrad{0}{h}
  \arrow{r}{\uGh{0}}\arrow[bend left]{u}{\Egrad{h}} &[2em] \Xcurl{0}{h}
  \arrow{r}{\uCh{0}}\arrow[bend left]{u}{\Ecurl{h}} &[2em] \Xdiv{0}{h}
  \arrow{r}{\Dh{0}}\arrow[bend left]{u}{\Ediv{h}}  &[1em] \Poly{0}(\Th)
  \arrow[bend left]{u}{i}
  \arrow{r} & 0,
\end{tikzcd}
\end{equation}
where $i$ is the natural inclusion and $\lproj{0}{h}$ is the $L^2$-orthogonal projection on $\Poly{0}(\Th)$.
In the rest of this section we provide the definitions of the reductions $\Rgrad{h}$, $\Rcurl{h}$, $\Rdiv{h}$, and of the extensions $\Egrad{h}$, $\Ecurl{h}$, $\Ediv{h}$, and show that they satisfy \textbf{(C1)} and \textbf{(C3)}.

\subsubsection{Reductions}\label{sec:reductions.extensions:R}

The reduction maps are naturally obtained taking the $L^2$-orthogonal projections of the components attached to the lowest-dimensional mesh entities in each space, i.e.,
\begin{alignat*}{4}
  \Rgrad{h}\underline{q}_h &\coloneq (q_V)_{V\in\Vh} &\qquad& \forall\underline{q}_h\in\Xgrad{k}{h},
  \\ 
  \Rcurl{h}\uvec{v}_h &\coloneq (\lproj{0}{E} v_E)_{E\in\Eh} &\qquad& \forall\uvec{v}_h\in\Xcurl{k}{h},
  \\ 
  \Rdiv{h}\uvec{w}_h &\coloneq (\lproj{0}{F} w_F)_{F\in\Fh} &\qquad& \forall\uvec{w}_h\in\Xdiv{k}{h}.
\end{alignat*}
By definition of the discrete differential and reduction operators, one can easily check that $\Rgrad{h}\Igrad{k}{h}=\Igrad{0}{h}$, $\Rcurl{h}\uGh{k} = \uGh{0}\Rgrad{h}$, $\Rdiv{h}\uCh{k} = \uCh{0}\Rcurl{h}$, and $\lproj{0}{h}\Dh{k} = \Dh{0}\Rdiv{h}$, showing that the reductions yield a cochain map, as required by \textbf{(C3)}.

\subsubsection{Extensions}\label{sec:reductions.extensions:E}

The definition of the extension maps is, on the other hand, more subtle.
Concerning the gradient space, we set, for all $\underline{q}_h\in\Xgrad{0}{h}$,
\begin{subequations}\label{eq:E.grad.h}
  \begin{equation}
    \Egrad{h}\underline{q}_h
    \coloneq\big(
    (\EPoly{T}\underline{q}_T)_{T\in\Th},
    (\EPoly{F}\underline{q}_F)_{F\in\Fh},
    (\EPoly{E}\underline{q}_E)_{E\in\Eh},
    (q_V)_{V\in\Vh}
    \big)
  \end{equation}
  with, for all $E\in\Eh$, $\EPoly{E}\underline{q}_E\in\Poly{k-1}(E)$ such that
  \begin{equation}\label{eq:E.grad.h:EqE}
    \int_E\EPoly{E}\underline{q}_E~r_E'
    = -\int_E\GE{0}\underline{q}_E~r_E
    + q_{V_2}~r_E(\bvec{x}_{V_2})
    - q_{V_1}~r_E(\bvec{x}_{V_1})
    \qquad\forall r_E\in\Poly{k}(E),
  \end{equation}
  for all $F\in\Fh$, $\EPoly{F}\underline{q}_F\in\Poly{k-1}(F)$ such that
  \begin{multline}\label{eq:E.grad.h:EqF}
    \int_F\EPoly{F}\underline{q}_F~\DIV_F\bvec{v}_F
    = -\int_F\cGF{0}\underline{q}_F\cdot\bvec{v}_F
    + \sum_{E\in\EF}\omega_{FE}\int_E\trE\Egrad{E}\underline{q}_E~(\bvec{v}_F\cdot\normal_{FE})
    \\
    \forall\bvec{v}_F\in\cRoly{k}(F),
  \end{multline}
  and, for all $T\in\Th$, $\EPoly{T}\underline{q}_T\in\Poly{k-1}(T)$ such that
  \begin{multline}\label{eq:E.grad.h:EqT}
    \int_T\EPoly{T}\underline{q}_T~\DIV\bvec{v}_T
    = -\int_T\cGT{0}\underline{q}_T\cdot\bvec{v}_T
    + \sum_{F\in\FT}\omega_{TF}\int_F\trF\Egrad{F}\underline{q}_F~(\bvec{v}_T\cdot\normal_F)
    \\
    \forall\bvec{v}_T\in\cRoly{k}(T).
  \end{multline}
\end{subequations}
In \eqref{eq:E.grad.h:EqF} and \eqref{eq:E.grad.h:EqT}, we have respectively introduced the notations $\Egrad{E}\underline{q}_E\coloneq\big(\EPoly{E}\underline{q}_E,(q_V)_{V\in\VE}\big)$ and  $\Egrad{F}\underline{q}_F\coloneq\big(\EPoly{F}\underline{q}_F, (\EPoly{E}\underline{q}_E)_{E\in\EF}, (q_V)_{V\in\VF}\big)$ (with $\VE$, resp. $\VF$, denoting the set of vertices of $E$, resp. $F$).

\begin{remark}[Test functions in the definition of the extension operators]
  To properly define $\EPoly{E}$, we should only consider in \eqref{eq:E.grad.h:EqE} test functions $r_E\in\Poly{0,k}(E)$ (as the derivative
  is an isomorphism $\Poly{0,k}(E)\to\Poly{k-1}(E)$); however, we note that \eqref{eq:E.grad.h:EqE} is also satisfied for constant $r_E$ since the right-hand side then vanishes by definition of $\GE{0}$. This is why we can actually consider test functions in the entire space $\Poly{k}(E)$.
  Similar considerations hold for \eqref{eq:E.curl.h:EvF} and \eqref{eq:E.div.h:EwT} below.
\end{remark}

The extension operator from $\Xcurl{0}{h}$ to $\Xcurl{k}{h}$ is such that, for all $\uvec{v}_h\in\Xcurl{0}{h}$,
\begin{subequations}\label{eq:E.curl.h}
  \begin{equation}
    \Ecurl{h}\uvec{v}_h
    \coloneq\big(
    (\ERoly{T}\uvec{v}_T,\Rcproj{k}{T}\Pcurl{0}\uvec{v}_T)_{T\in\Th},
    (\ERoly{F}\uvec{v}_F,\Rcproj{k}{F}\trFt{0}\uvec{v}_F)_{F\in\Fh},
    (v_E)_{E\in\Eh}
    \big),
  \end{equation}
  where, for all $F\in\Fh$, $\ERoly{F}\uvec{v}_F\in\Roly{k-1}(F)$ is such that
  \begin{equation}\label{eq:E.curl.h:EvF}
    \int_F\ERoly{F}\uvec{v}_F\cdot\VROT_F r_F
    = \int_F\CF{0}\uvec{v}_F~r_F
    + \sum_{E\in\EF}\omega_{FE}\int_E v_E~r_F
    \qquad\forall r_F\in\Poly{k}(F),
  \end{equation}
  and, for all $T\in\Th$, $\ERoly{T}\uvec{v}_T\in\Roly{k-1}(T)$ is such that
  \begin{multline}\label{eq:E.curl.h:EvT}
    \int_T\ERoly{T}\uvec{v}_T\cdot\CURL\bvec{w}_T
    = \int_T\cCT{0}\uvec{v}_T\cdot\bvec{w}_T
    - \sum_{F\in\FT}\omega_{TF}\int_F\trFt{k}\Ecurl{F}\uvec{v}_F\cdot(\bvec{w}_T\times\normal_F)
    \\
    \forall\bvec{w}_T\in\cGoly{k}(T),
  \end{multline}
\end{subequations}
with
\begin{equation}\label{eq:Ecurl.F}
  \Ecurl{F}\uvec{v}_F\coloneq\big(\ERoly{F}\uvec{v}_F,\Rcproj{k}{F}\trFt{0}\uvec{v}_F,(v_E)_{E\in\EF}\big).
\end{equation}
\smallskip

The extension operator from $\Xdiv{0}{h}$ to $\Xdiv{k}{h}$ is such that, for all $\uvec{w}_h\in\Xdiv{0}{h}$,
\begin{subequations}\label{eq:E.div.h}
  \begin{equation}
    \Ediv{h}\uvec{w}_h
    = \big(
    (\EGoly{T}\uvec{w}_T,\Gcproj{k}{T}\Pdiv{0}\uvec{w}_T)_{T\in\Th},
    (w_F)_{F\in\Fh}
    \big),
  \end{equation}
  where, for all $T\in\Th$, $\EGoly{T}\uvec{w}_T\in\Goly{k-1}(T)$ is such that
  \begin{equation}\label{eq:E.div.h:EwT}
    \int_T\EGoly{T}\uvec{w}_T\cdot\GRAD r_T
    = -\int_T\DT{0}\uvec{w}_T~r_T
    + \sum_{F\in\FT}\omega_{TF}\int_Fw_F~r_T
    \qquad\forall r_T\in\Poly{k}(T).
  \end{equation}
\end{subequations}

A simple inspection of the reductions and extensions shows that \textbf{(C1)} holds for these maps, that is:
  \begin{equation}\label{eq:left-inverse.E}
    \begin{alignedat}{4}
      \Rgrad{h}\Egrad{h} &= \Id_{\Xgrad{0}{h}},
      &\qquad \Rcurl{h}\Ecurl{h}&=\Id_{\Xcurl{0}{h}},\\
      \Rdiv{h}\Ediv{h}&=\Id_{\Xdiv{0}{h}},
      &\qquad \lproj{0}{h}i&=\Id_{\Poly{0}(\Th)}.
    \end{alignedat}
  \end{equation}
The fact that the above-defined extensions form a cochain map (i.e., they satisfy \textbf{(C3)}) requires, on the other hand, a detailed proof, provided in the following lemma.

\begin{lemma}[Cochain map properties for the extensions]\label{lem:cochain.map:E}
  The extensions are cochain maps, that is:
  \begin{alignat}{4}
    \label{eq:Igradk.E=Igrad0}
    \Igrad{k}{h}C
    ={}& \Egrad{h}\Igrad{0}{h}C&\qquad&\forall C\in\Real,\\  
    \label{eq:Gk.E=G0}
    \uGh{k}\Egrad{h}\underline{q}_h
    ={}& \Ecurl{h}\uGh{0}\underline{q}_h&\qquad&\forall\underline{q}_h\in\Xgrad{0}{h},\\
    \label{eq:Ck.E=C0}
    \uCh{k}\Ecurl{h}\uvec{v}_h
    ={}& \Ediv{h}\uCh{0}\uvec{v}_h&\qquad&\forall\uvec{v}_h\in\Xcurl{0}{h},\\
    \label{eq:Dk.E=D0}
    \Dh{k}\Ediv{h}\uvec{w}_h
    ={}& \Dh{0}\uvec{w}_h&\qquad&\forall\uvec{w}_h\in\Xdiv{0}{h}.
  \end{alignat}
\end{lemma}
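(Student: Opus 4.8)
The plan is to verify each of the four identities in Lemma \ref{lem:cochain.map:E} by testing against the polynomial components that define the target $\DDR{k}$ space, and in each case reducing to the defining integration-by-parts relations of the extension operators. The overall strategy exploits a recurring structural feature: the extensions $\EPoly{\sfP}$, $\ERoly{\sfP}$, $\EGoly{\sfP}$ were \emph{designed} to satisfy relations \eqref{eq:E.grad.h:EqE}--\eqref{eq:E.div.h:EwT} that mirror the defining relations of the $\DDR{k}$ discrete operators \eqref{eq:GE}--\eqref{eq:DT}, but with the complete $k$-degree trace/gradient data replaced by its lowest-order ($k=0$) counterpart. So the proof amounts to matching these two families of integration-by-parts formulas component by component.

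First I would dispose of \eqref{eq:Igradk.E=Igrad0}: for a constant $C$, the $\DDR{0}$ interpolate $\Igrad{0}{h}C$ has all vertex values equal to $C$, and one checks that every defining relation \eqref{eq:E.grad.h:EqE}--\eqref{eq:E.grad.h:EqT} is solved by the corresponding $L^2$-projection of $C$ (the constant function), so $\Egrad{h}\Igrad{0}{h}C=\Igrad{k}{h}C$ by uniqueness. I would then treat \eqref{eq:Gk.E=G0} by verifying equality of each polynomial component of $\uGh{k}\Egrad{h}\underline{q}_h$ against that of $\Ecurl{h}\uGh{0}\underline{q}_h$. The edge components match immediately since $\GE{k}$ of the extended edge data recovers $v_E=\GE{0}\underline{q}_E$ by \eqref{eq:E.grad.h:EqE}. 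For the face $\Roly{k-1}$-components I would test the definition \eqref{eq:cGF} of $\cGF{k}(\Egrad{F}\underline{q}_F)$ against $\VROT_F r_F\in\Roly{k}(F)$, compare with the defining relation \eqref{eq:E.curl.h:EvF} of $\ERoly{F}(\uGh{0}\underline{q}_F)$, and use that the face curl of a gradient vanishes together with $\CF{0}$ being the $\DDR{0}$ curl; the element components follow analogously from \eqref{eq:cGT} versus \eqref{eq:E.curl.h:EvT}. The complement components of $\uGh{k}\Egrad{h}$ are handled by showing they agree with $\Rcproj{k}{F}\trFt{0}$ and $\Rcproj{k}{T}\Pcurl{0}$ of the appropriate $\DDR{0}$ potentials, which is where the explicit choices in the complement slots of \eqref{eq:E.curl.h} were tailored. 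The curl identity \eqref{eq:Ck.E=C0} and divergence identity \eqref{eq:Dk.E=D0} are established by the same pattern, testing \eqref{eq:CF}, \eqref{eq:cCT}, \eqref{eq:DT} against their natural polynomial test spaces and invoking \eqref{eq:E.curl.h:EvT}, \eqref{eq:E.div.h:EwT}.

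\textbf{The hard part will be} the bookkeeping of boundary terms and the correct threading of traces through the dimensional hierarchy edge $\to$ face $\to$ element. Each defining relation for $\EPoly{F}$ or $\EPoly{T}$ already refers to the \emph{full} $k$-degree traces $\trE\Egrad{E}\underline{q}_E$ and $\trF\Egrad{F}\underline{q}_F$ of the lower-dimensional extensions, so to match the $\DDR{k}$ operators one must verify that these reconstructed traces behave, on the boundary integrals, exactly as the genuine traces appearing in \eqref{eq:cGF}--\eqref{eq:cCT}. The subtlety is that the complement components $\Rcproj{k}{F}\trFt{0}$, $\Rcproj{k}{T}\Pcurl{0}$, $\Gcproj{k}{T}\Pdiv{0}$ do \emph{not} come from an integration-by-parts relation but are defined directly; one must check they are precisely what makes the commuting squares close, i.e., that the $\DDR{k}$ operator applied to the extension produces exactly these projected potentials. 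I expect this to require the commuting-diagram/consistency identities relating $\trFt{k}$, $\Pcurl{k}$, $\Pdiv{k}$ to the $\cvec{R}^{\compl}$- and $\cvec{G}^{\compl}$-projections, which force the choice of complement slots in \eqref{eq:E.curl.h}--\eqref{eq:E.div.h}.

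Once the component-wise matches are assembled, each identity follows from the uniqueness of the element defined by its $L^2$-tests, and the lemma is complete. Throughout, I would lean on the observation—already flagged in the remark on test functions—that the $\DDR{k}$ relations extend to test against constants with vanishing right-hand side, so the low-order data threads consistently into the high-order relations without a degree mismatch.
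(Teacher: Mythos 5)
Your proposal follows essentially the same route as the paper's proof: component-by-component matching of the defining integration-by-parts relations of the extensions against those of the $\DDR{k}$ operators, using the level-$0$ complex properties ($\CF{0}\uGF{0}=0$, $\cCT{0}\uGT{0}=\bvec{0}$, $\DT{0}\uCT{0}=0$) and the consistency identities linking $\trFt{k}$, $\Pcurl{k}$, $\Pdiv{k}$ to the complement projections, exactly as the paper does. The only quibble is a harmless index slip (the $\Roly{k-1}(F)$ component is tested against $\VROT_F r_F$ with $r_F\in\Poly{k}(F)$, i.e.\ against $\Roly{k-1}(F)$ rather than $\Roly{k}(F)$); otherwise the plan, including the flagged vertex-term cancellation around face boundaries and the threading of traces through the edge--face--element hierarchy, matches the paper's argument.
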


\begin{proof}
  \emph{(i) Proof of \eqref{eq:Igradk.E=Igrad0}}. This amounts to checking that, for any mesh entity $\sfP\in\Th\cup\Fh\cup\Eh$, $\EPoly{\sfP}\Igrad{0}{\sfP}C=\lproj{k-1}{\sfP}C$ for all $C\in\Real$, which is straightforward from the definition of the extension operators and the polynomial consistency of the edge and face gradients and scalar traces.
  \medskip\\  
  \emph{(ii) Proof of \eqref{eq:Gk.E=G0}}.
  Let $\underline{q}_h\in\Xgrad{0}{h}$.
  Combining the definition \eqref{eq:GE} of $\GE{k}$ with the definition \eqref{eq:E.grad.h:EqE} of $\EPoly{E}$ immediately gives $\GE{k}\Egrad{E}\underline{q}_E=\GE{0}\underline{q}_E$, which shows the equality of the edge components in \eqref{eq:Gk.E=G0}.
  Applying the definition \eqref{eq:cGF} of $\cGF{k}$ with $\bvec{v}_F\in\cRoly{k}(F)\subset\vPoly{k}(F)$ and invoking the definition \eqref{eq:E.grad.h:EqF} of $\EPoly{F}$ gives $\Rcproj{k}{F}\cGF{k}\Egrad{F}\underline{q}_F=\Rcproj{k}{F}\cGF{0}\underline{q}_F=\Rcproj{k}{F}\trFt{0}\uGF{0}\underline{q}_F$, where the second equality comes from \cite[Eq.~(3.26)]{Di-Pietro.Droniou:21*1}. The same arguments, based on \eqref{eq:cGT}, \eqref{eq:E.grad.h:EqT}, and \cite[Eq.~(4.29)]{Di-Pietro.Droniou:21*1}, give $\Rcproj{k}{T}\cGT{k}\Egrad{T}\underline{q}_T=\Rcproj{k}{T}\Pcurl{0}\uGT{0}\underline{q}_T$. This establishes the equality of the components in $\cRoly{k}(\sfP)$, $\sfP\in\Th\cup\Fh$, in \eqref{eq:Gk.E=G0}.

  We next show that, for all $F\in\Fh$,
  \begin{equation}\label{eq:Gk.E=G0:Rcproj.F}
    \Rproj{k-1}{F}\cGF{k}\Egrad{F}\underline{q}_F
    = \ERoly{F}\uGF{0}\underline{q}_F.
  \end{equation}
  Apply the definition \eqref{eq:cGF} of $\cGF{k}$ to $\bvec{v}_F=\VROT_F r_F$ for some $r_F\in\Poly{k}(F)$ and use $\DIV_F\VROT_F=0$ together with $\VROT_F r_F\cdot\normal_{FE}=-(r_F)_{|E}'$, the derivative being taken in the direction of $\tangent_E$ (see \cite[Eq.~(4.20)]{Di-Pietro.Droniou.ea:20}) to write
  \begin{align*}
    \int_F \cGF{k}\Egrad{F}\underline{q}_F\cdot\VROT_F r_F={}&-\sum_{E\in\EF}\omega_{FE}\int_E\trE\Egrad{E}\underline{q}_E\,(r_F)_{|E}'\\
    ={}&-\sum_{E\in\EF}\omega_{FE}\int_E\lproj{k-1}{E}(\trE\Egrad{E}\underline{q}_E)\,(r_F)_{|E}'\\
    ={}&\sum_{E\in\EF}\omega_{FE}\int_E\GE{0}\underline{q}_E\,r_F
    -\cancel{%
      \sum_{E\in\EF}\omega_{FE}\left(
      q_{V_{2,E}}r_F(\bvec{x}_{V_{2,E}})-q_{V_{1,E}}r_F(\bvec{x}_{V_{1,E}})
      \right),
    }
  \end{align*}
  where the introduction of the projector in the second line is justified by $(r_F)_{|E}'\in\Poly{k-1}(E)$, and the third line follows from $\lproj{k-1}{E}(\trE\Egrad{E}\underline{q}_E)=\EPoly{E}\underline{q}_E$ (by definition of $\trE$) and the definition \eqref{eq:E.grad.h:EqE} of $\EPoly{E}$ (we have added the index $E$ in the vertices to clearly show that they are related to each edge in the sum); the final cancellation is obtained by noticing that each vertex of $F$ appears twice in the sum with opposite orientations $\omega_{FE}$. We then apply the definition \eqref{eq:E.curl.h:EvF} of $\ERoly{F}$ to $\uvec{v}_F=\uGF{0}\underline{q}_F$ together with the complex property $\CF{0}\uGF{0}=0$ to deduce
  \[
  \int_F \cGF{k}\Egrad{F}\underline{q}_F\cdot\VROT_F r_F=\int_F \ERoly{F}\uGF{0}\underline{q}_F\cdot\VROT_F r_F,
  \]
  which concludes the proof of \eqref{eq:Gk.E=G0:Rcproj.F}.
  Together with the equality of the components in $\cRoly{k}(F)$ and on the edges, this shows that
  \begin{equation}\label{eq:GFk.E=GF0}
    \uGF{k}\Egrad{F}\underline{q}_F=\Ecurl{F}\uGF{0}\underline{q}_F\qquad\forall F\in\Fh.
  \end{equation}  
  
  Let us now take $T\in\Th$ and let us prove the equality of the components in $\Roly{k-1}(T)$ in \eqref{eq:Gk.E=G0}, i.e.,
  \begin{equation}\label{eq:Gk.E=G0:Rcproj.T}
      \Rproj{k-1}{T}\cGT{k}\Egrad{T}\underline{q}_T
      = \ERoly{T}\uGT{0}\underline{q}_T.
  \end{equation}%
  For all $\bvec{w}_T\in\cGoly{k}(T)$, using the link between element and face discrete gradients together with the property $\trFt{k}\uGF{k}=\cGF{k}$ of the tangential trace (see \cite[Proposition 1 and Eq.~(3.26)]{Di-Pietro.Droniou:21*1}), we have
  \begin{align*}
    \int_T \cGT{k}\Egrad{T}\underline{q}_T\cdot\CURL\bvec{w}_T
    ={}&-\sum_{F\in\FT}\omega_{TF}\int_F \trFt{k}\uGF{k}\Egrad{F}\underline{q}_F\cdot(\bvec{w}_T\times\normal_F)\\
    ={}&-\sum_{F\in\FT}\omega_{TF}\int_F \trFt{k}\Ecurl{F}\uGF{0}\underline{q}_F\cdot(\bvec{w}_T\times\normal_F)\\
    ={}&\int_T \ERoly{T}\uGT{0}\underline{q}_T\cdot\CURL\bvec{w}_T,
  \end{align*}
  where the second equality follows from \eqref{eq:GFk.E=GF0}, and the third one from the definition \eqref{eq:E.curl.h:EvT} of $\ERoly{T}$ together with the complex property $\uCT{0}\uGT{0}=0$ (which implies $\cCT{0}\uGT{0}=\bvec{0}$ since $\Pdiv{0}\uCT{0}=\cCT{0}$ by \cite[Eq.~(4.30)]{Di-Pietro.Droniou:21*1}). This concludes the proof of \eqref{eq:Gk.E=G0:Rcproj.T}.
  \medskip\\  
  \emph{(iii) Proof of \eqref{eq:Ck.E=C0}}.
  Let $\uvec{v}_h\in\Xcurl{0}{h}$ and $F\in\Fh$.
  The definitions \eqref{eq:CF} of $\CF{k}$ and \eqref{eq:Ecurl.F} of $\Ecurl{F}$ show that 
  \begin{equation}\label{eq:CFk.E=CF0}
    \CF{k}\Ecurl{F}\uvec{v}_F=\CF{0}\uvec{v}_F,
  \end{equation}
  which proves the equality of the face components in \eqref{eq:Ck.E=C0}. Take now $T\in\Th$ and consider the component in $\cGoly{k}(T)$. Applying the definitions \eqref{eq:cCT} of $\cCT{k}$ and \eqref{eq:E.curl.h:EvT} of $\Ecurl{T}$ to a generic $\bvec{w}_T\in\cGoly{k}(T)\subset\vPoly{k}(T)$ yields $\Gcproj{k}{T}\cCT{k}\Ecurl{T}\uvec{v}_T=\Gcproj{k}{T}\cCT{0}\uvec{v}_T=\Gcproj{k}{T}\Pdiv{0}\uCT{0}\uvec{v}_T$, where the second equality is obtained applying {\cite[Eq.~(4.30)]{Di-Pietro.Droniou:21*1}}. It remains to show the equality of the components in $\Goly{k-1}(T)$ in \eqref{eq:Ck.E=C0}, i.e.,
    \begin{equation}\label{eq:Ck.E=C0:Gproj.T}
      \Gproj{k-1}{T}\cCT{k}\Ecurl{T}\uvec{v}_T = \EGoly{T}\uCT{0}\uvec{v}_T.
    \end{equation}
We use the link between element and face discrete curls \cite[Proposition 4]{Di-Pietro.Droniou:21*1} together with \eqref{eq:CFk.E=CF0} to write, for all $r_T\in\Poly{k}(T)$,
  \[
  \int_T \cCT{k}\Ecurl{T}\uvec{v}_T\cdot\GRAD r_T=\sum_{F\in\FT}\omega_{TF}\int_F \CF{k}\Ecurl{F}\uvec{v}_Fr_T=\sum_{F\in\FT}\omega_{TF}\int_F \CF{0}\uvec{v}_Fr_T.
  \]
  Invoking then the definition \eqref{eq:E.div.h:EwT} of $\EGoly{T}$ with $\uvec{w}_T=\uCT{0}\uvec{v}_T$ and using the complex property $\DT{0}\uCT{0}=0$, we infer
  \[
  \int_T \cCT{k}\Ecurl{T}\uvec{v}_T\cdot\GRAD r_T=\int_T\EGoly{T}\uCT{0}\uvec{v}_T\cdot\GRAD r_T,
  \]
  which concludes the proof of \eqref{eq:Ck.E=C0:Gproj.T}.
  \medskip\\  
  \emph{(iv) Proof of \eqref{eq:Dk.E=D0}}.
  Let $\uvec{w}_h\in\Xdiv{0}{h}$.
  For all $T\in\Th$, apply the definitions \eqref{eq:DT} of $\DT{k}$ and \eqref{eq:E.div.h:EwT} of $\EGoly{T}$ to get $\DT{k}\Ediv{T}\uvec{w}_T=\DT{0}\uvec{w}_T$.
  
\end{proof}

\subsection{Proof of \textbf{(C2)}}\label{sec:proof.C2}

To conclude the proof of Theorem \ref{thm:ddr:cohomology}, we need to show that the reduction and extension maps satisfy \textbf{(C2)}, which is the purpose of the following lemma.

\begin{lemma}[Property \textbf{(C2)}]\label{lem:propC2}
The maps in \eqref{eq:DDRk.DDR0} satisfy the following properties:
  \begin{subequations}\label{eq:left-inverse.R}
    \begin{gather}\label{eq:left-inverse.R:grad}
      \text{%
        For all $\underline{q}_h\in\Ker\uGh{k}$, there exists $C\in\Real$ such that
        $\Egrad{h}\Rgrad{h}\underline{q}_h-\underline{q}_h = \Igrad{k}{h} C$,
      }
      \\\label{eq:left-inverse.R:curl}
      \text{%
        For all $\uvec{v}_h\in\Ker\uCh{k}$, there exists $\underline{q}_h\in\Xgrad{k}{h}$ such that
        $\Ecurl{h}\Rcurl{h}\uvec{v}_h - \uvec{v}_h = \uGh{k}\underline{q}_h$,
      }
      \\\label{eq:left-inverse.R:div}
      \text{%
        For all $\uvec{w}_h\in\Ker\Dh{k}$, there exists $\uvec{v}_h\in\Xcurl{k}{h}$ such that
        $\Ediv{h}\Rdiv{h}\uvec{w}_h - \uvec{w}_h = \uCh{k}\uvec{v}_h$,
      }
      \\\label{eq:left-inverse.R:Pk}
      \text{%
        For all $r_h\in\Poly{k}(\Th)$, there exists $\uvec{w}_h\in\Xdiv{k}{h}$ such that
        $\lproj{0}{h}r_h-r_h = \Dh{k}\uvec{w}_h$.
      }
    \end{gather}
  \end{subequations}%
\end{lemma}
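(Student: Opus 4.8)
The plan is to establish the four statements \eqref{eq:left-inverse.R:grad}--\eqref{eq:left-inverse.R:Pk} separately, exploiting the same two ingredients throughout: the \emph{local} exactness of the DDR complex on each (topologically trivial) face and element, and the fact that, by \textbf{(C1)}, every difference $(\Egrad{h}\Rgrad{h}-\Id)\underline{q}_h$, $(\Ecurl{h}\Rcurl{h}-\Id)\uvec{v}_h$, $(\Ediv{h}\Rdiv{h}-\Id)\uvec{w}_h$ has vanishing reduction, i.e.\ zero vertex values, zero edge averages, or zero face averages, respectively. Combining the cochain property \textbf{(C3)} (Lemma \ref{lem:cochain.map:E}) with the complex property of $\DDR{k}$, each such difference also lies in the kernel of the \emph{same} operator against which it is tested (e.g.\ $\uCh{k}(\Ecurl{h}\Rcurl{h}-\Id)\uvec{v}_h=\Ediv{h}\Rdiv{h}\uCh{k}\uvec{v}_h=0$ for $\uvec{v}_h\in\Ker\uCh{k}$). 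The preimage under the previous differential is then built by hand, component by component, from the lowest-dimensional mesh entities upward; the vanishing averages are precisely what guarantees solvability of the local problems and the compatibility of the reconstructed components on shared entities.

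The two extreme statements are comparatively direct. For \eqref{eq:left-inverse.R:grad} I would show $\Ker\uGh{k}=\Image\Igrad{k}{h}$: the relations $\GE{k}\underline{q}_E=0$ force $q_{V_1}=q_{V_2}$ on every edge, hence (as $\Omega$ is connected) a single constant $C$ on all vertices and edges, while vanishing of the face and element gradients together with local exactness (the kernel of the local discrete gradient is the constants on each topologically trivial face/element) forces $q_F=C$ and $q_T=C$; thus $\underline{q}_h=\Igrad{k}{h}C$ and, by \eqref{eq:Igradk.E=Igrad0}, $\Egrad{h}\Rgrad{h}\underline{q}_h=\underline{q}_h$, so the difference is $\Igrad{k}{h}0$. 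For \eqref{eq:left-inverse.R:Pk} the construction is purely local: on each $T$ the datum $\lproj{0}{T}r_T-r_T$ has zero mean, and one solves $\DT{k}\uvec{w}_T=\lproj{0}{T}r_T-r_T$ with a face-free $\uvec{w}_T$ whose interior component is $\bvec{w}_{\cvec{G},T}=\GRAD\phi$, $\phi\in\Poly{0,k}(T)$; by \eqref{eq:DT} this amounts to the symmetric positive-definite problem $-\int_T\GRAD\phi\cdot\GRAD s=\int_T(\lproj{0}{T}r_T-r_T)\,s$ for all $s\in\Poly{0,k}(T)$, solvable since the datum is mean-free, and no matching between elements is needed.

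For \eqref{eq:left-inverse.R:curl} I would construct the scalar potential $\underline{q}_h$ with $\uGh{k}\underline{q}_h=\uvec{\eta}_h$, where $\uvec{\eta}_h:=\Ecurl{h}\Rcurl{h}\uvec{v}_h-\uvec{v}_h$, in three layers. Set all vertex values to $0$; on each edge the component $\eta_E$ of $\uvec{\eta}_h$ is mean-free, so $\GE{k}\underline{q}_E=\eta_E$ is solvable and automatically respects the zero vertex values (since $\int_E\eta_E=q_{V_2}-q_{V_1}=0$). On each face, $\CF{k}\uvec{\eta}_F=0$, so planar local exactness gives $\uvec{\eta}_F\in\Image\uGF{k}$; the mean-free condition makes the vertex values of any local primitive constant along $\partial F$, so after fixing the single additive constant the primitive matches the already-chosen edge data and its interior component defines $q_F$. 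Repeating one dimension up, using $\uvec{\eta}_T\in\Ker\uCT{k}$ and volumetric local exactness, yields $q_T$. Because the target space $\Xcurl{k}{h}$ carries edge data, the edge unknowns are pinned uniquely and the only gauge freedom is a constant per entity, fixed by connectedness; hence no nontrivial gluing arises here.

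The statement \eqref{eq:left-inverse.R:div} is where I expect the main difficulty. Writing $\uvec{\eta}_h:=\Ediv{h}\Rdiv{h}\uvec{w}_h-\uvec{w}_h\in\Ker\Dh{k}$, whose face components $\eta_F$ are mean-free, I would proceed in two stages. First, match the face components: set $v_E=0$ and, on each $F$, solve $\CF{k}\uvec{v}_F=\eta_F$ for $\bvec{v}_{\cvec{R},F}\in\Roly{k-1}(F)$ (solvable by \eqref{eq:CF} exactly because $\eta_F$ is mean-free), leaving the complementary and interior components at $0$; the residual $\uvec{\zeta}_h:=\uvec{\eta}_h-\uCh{k}\uvec{v}_h$ then has zero face components and still lies in $\Ker\Dh{k}$. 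The crux is a local lemma: on a topologically trivial $T$, any $\uvec{\zeta}_T\in\Ker\DT{k}$ with vanishing face components equals $\uCT{k}\uvec{v}_T$ for an \emph{interior} $\uvec{v}_T$ (zero edge and face components), which can then be assembled element-by-element without conflict on shared faces. To prove it I would take an arbitrary local primitive $\uvec{v}_T^\ast$ from volumetric exactness and gauge away its skeleton components by a discrete gradient $\uGT{k}\underline{s}_T$: the identities $\sum_{E\in\EF}\omega_{FE}\int_E v_E^\ast=0$ (from $\CF{k}\uvec{v}_F^\ast=0$) and the simple-connectedness of $\partial T$ let one define consistent vertex values, then edge and face components of $\underline{s}_T$, so that $\uvec{v}_T^\ast-\uGT{k}\underline{s}_T$ has trivial skeleton trace while retaining the prescribed curl. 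This element-boundary gluing, resting on $H^1(\partial T)=0$ (equivalently, the standing assumption that elements are topologically trivial), is the principal obstacle; the remaining solvability and uniqueness checks mirror those used for \eqref{eq:left-inverse.R:curl}.
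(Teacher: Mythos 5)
Your proposal is correct, and it rests on the same two pillars as the paper's proof — local exactness of the DDR complex on the (topologically trivial) faces and elements, and the fact that each difference $\Ebullet{h}\Rbullet{h}-\Id$ has vanishing low-order reduction, so its vertex values, edge averages or face averages are zero — but it reorganises the two middle items differently. Parts \eqref{eq:left-inverse.R:grad} and \eqref{eq:left-inverse.R:Pk} coincide with the paper's argument. For \eqref{eq:left-inverse.R:curl} the paper works top-down: it takes element-local primitives $\underline{q}_T^T$ and proves a posteriori that, after subtracting one constant per element, their vertex, edge and face components are single-valued; your bottom-up construction (zero vertex values, then edge components from the mean-free $\eta_E$, then face and element interiors from planar and volumetric exactness) is an equivalent reshuffling. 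The genuine divergence is in \eqref{eq:left-inverse.R:div}: the paper keeps the $\Roly{k-1}(F)$ face components inside the local primitives and glues them through a rather delicate gauge-fixing (Proposition \ref{eq:pi0.CF=0} to normalise the edge averages, then per-edge reference elements $T_E$ and an orientation-dependent choice of the face component of the gauge), whereas you first subtract a globally single-valued, face-supported $\uvec{v}_h$ with $\CF{k}\uvec{v}_F=\eta_F$ (solvable precisely because $\eta_F$ is mean-free — the same computation as in Proposition \ref{eq:pi0.CF=0}), which reduces the problem to local data with vanishing face components, and then gauge the remaining local primitives to have identically zero skeleton trace so that the element-by-element gluing is trivial. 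The topological input is identical in both routes: the vanishing of $\lproj{0}{F}\CF{k}$ on the primitive kills the circulation of its edge components around each face boundary, and simple connectedness of $\partial T$ then yields a consistent vertex potential. Your variant buys a cleaner gluing at the price of one extra (explicit, face-local) global lift; the only detail left implicit is that, once the vertex, edge and $\cRoly{k}(F)$-determined face components of the gauge $\underline{s}_T$ are fixed, its $\Roly{k-1}(F)$ gradient components also match those of $\uvec{v}_T^\ast$ — this follows by comparing $\underline{s}_F$ with the planar-exactness primitive of $\uvec{v}_F^\ast$, the two differing by a constant.
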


\begin{proof}  
  \emph{(i)} \emph{Proof of \eqref{eq:left-inverse.R:grad}.}
  For all $\underline{q}_h\in\Xgrad{k}{h}$, $\uGh{k}\underline{q}_h = \uvec{0}$ implies $\uGT{k}\underline{q}_T = \uvec{0}$ for all $T\in\Th$ which, by exactness of the local DDR sequence (see the proof of \cite[Theorem 2]{Di-Pietro.Droniou:21*2} applied to $T$, which is topologically trivial by assumption), implies the existence of $Q_T\in\Real$ such that $\underline{q}_T=\Igrad{k}{T}Q_T$ (where $\Igrad{k}{T}$ is the restriction to $\Xgrad{k}{T}$ of $\Igrad{k}{h}$). The definition \eqref{eq:Igradh} of the interpolator then shows that $q_V=Q_T$ for all vertex $V\in\VT$. We deduce that $Q_T=Q_{T'}$ whenever $T,T'\in\Th$ share a vertex which yields, by connectedness of $\Omega$, the existence of $Q\in\Real$ such that  $Q_T = Q$ for all $T\in\Th$.
  As a consequence, $\underline{q}_h=\Igrad{k}{h}Q$.
  Using this fact along with the cochain property of the extension and reduction, we infer that $\Egrad{h}\Rgrad{h}\underline{q}_h=\Egrad{h}\Igrad{0}{h}Q=\Igrad{k}{h}Q$, and thus that $\Egrad{ h}\Rgrad{h}\underline{q}_h - \underline{q}_h=\Igrad{k}{h}0$.
  This proves \eqref{eq:left-inverse.R:grad}.
  \medskip\\
  \emph{(ii)} \emph{Proof of \eqref{eq:left-inverse.R:curl}.} 
  We start by noticing that, since the reductions and extensions are cochain maps, if $\uvec{v}_h\in\Xcurl{k}{h}$ is such that $\uCh{k}\uvec{v}_h = \uvec{0}$ then
  \[
  \uCh{k}\Ecurl{h}\Rcurl{h}\uvec{v}_h=\Ediv{h}\uCh{0}\Rcurl{h}\uvec{v}_h=\Ediv{h}\Rdiv{h}\uCh{k}\uvec{v}_h=\uvec{0}.
  \]
  Hence, $\Ecurl{h}\Rcurl{h}\uvec{v}_h - \uvec{v}_h\in\Ker\uCh{k}$, and the exactness of the local DDR complex implies, for all $T\in\Th$, the existence of $\underline{q}_T^T\in\Xgrad{k}{T}$ such that
  \begin{equation}\label{eq:P.complex.ER:curl:T}
    \uvec{z}_T\coloneq\Ecurl{T}\Rcurl{T}\uvec{v}_T - \uvec{v}_T = \uGT{k}\underline{q}_T^T.
  \end{equation}
  We then have to check that the $\underline{q}_T^T$, $T\in\Th$, can be glued together to form an element of $\Xgrad{k}{h}$.
  Let $T\in\Th$ and notice that, by definition of the reduction and extension operators, for all $E\in\ET$, $\GE{k}\underline{q}_E^T=z_E=\lproj{0}{E}v_E-v_E$ has a zero integral over $E$.
  The definition \eqref{eq:GE} of $\GE{k}$ thus shows that $\underline{q}_T^T$ takes the same value at the vertices of $E$; since this holds for any edge of $T$ and the boundary of $T$ is connected, this implies the existence of $C_T\in\Real$ such that $q_V^T = C_T$ for all $V\in\VT$.
  By the substitution $\underline{q}_T^T\gets\underline{q}_T^T - \Igrad{k}{T} C_T$, which leaves \eqref{eq:P.complex.ER:curl:T} unaltered since $\uGT{k}\Igrad{k}{T} C_T= \uvec{0}$ by the complex property, we obtain local vectors $\underline{q}_T^T$, $T\in\Th$, that vanish (hence match) at mesh vertices.
  We next notice that, for all $E\in\Eh$ and any $T$ such that $E\in\ET$, using \eqref{eq:P.complex.ER:curl:T} along with the definition \eqref{eq:GE} of the edge gradient and the fact that the vertex values of $\underline{q}_T^T$ vanish,
  \begin{equation}\label{eq:cond.qE}
    \int_E z_E~r_E
    = \int_E\GE{k}\underline{q}_E^T~r_E
    = -\int_E q_E^T r_E'
    \qquad\forall r_E\in\Poly{0,k}(E).
  \end{equation}
  The relation \eqref{eq:cond.qE} implies that $q_E^T$ is in fact independent of $T$, and thus that there exists $q_E\in\Poly{k-1}(E)$ such that $q_E^T = q_E$ for all $T\in\Th$ such that $E\in\ET$.
  We therefore set $\underline{q}_E\coloneq (q_E, (0)_{V\in\VE})$ for all $E\in\Eh$, where we remind the reader that $\VE$ collects the vertices of $E$.

  Having proved the single-valuedness of the $\underline{q}_T^T$, $T\in\Th$, on the mesh edge skeleton, we next notice that, for all $F\in\Fh$ and all $T\in\Th$ such that $F\in\FT$, \eqref{eq:P.complex.ER:curl:T} followed by the definition \eqref{eq:cGF} of $\cGF{k}$ implies, for all $\bvec{w}_F\in\cRoly{k}(F)$,
  \[
  \int_F \bvec{z}_{\cvec{R},F}^\compl\cdot\bvec{w}_F
  = \int_F \cGF{k}\underline{q}_{F}^T\cdot\bvec{w}_F
  = -\int_F q_F^T\DIV_F\bvec{w}_F
  + \sum_{E\in\EF}\omega_{FE}\int_E\trE\underline{q}_E~(\bvec{w}_F\cdot\normal_{FE}),
  \]
  which shows, since $\DIV_F:\cRoly{k}(F)\to\Poly{k-1}(F)$ is an isomorphism, that $q_F^T$ only depends on $\bvec{z}_{\cvec{R},F}^\compl$ and $\trE\underline{q}_E$, quantities that are, in turn, independent of $T$.
  We therefore conclude that $q_F^T = q_F$ for all $F\in\Fh$ and all $T\in\Th$ having $F$ as a face.
  Setting $\underline{q}_h \coloneq ((q_T^T)_{T\in\Th},(q_F)_{F\in\Fh},(q_E)_{E\in\Eh},(0)_{V\in\Vh})\in\Xgrad{k}{h}$, we then have $\underline{q}_T=\underline{q}_T^T$ for all $T\in\Th$; recalling \eqref{eq:P.complex.ER:curl:T}, this concludes the proof of \eqref{eq:left-inverse.R:curl}.
  \medskip\\
  \emph{(iii)} \emph{Proof of \eqref{eq:left-inverse.R:div}.}
  Let $\uvec{w}_h\in\Xdiv{k}{h}$ be such that $\Dh{k}\uvec{w}_h = 0$.
  Since the reductions and extensions are cochain maps, as in Point (ii) above we have $\Dh{k}\Ediv{h}\Rdiv{h}\uvec{w}_h=i\lproj{0}{h}\Dh{k}\uvec{w}_h=0$.
  Hence, $\Ediv{h}\Rdiv{h}\uvec{w}_h-\uvec{w}_h\in\Ker\Dh{k}$, and the exactness of the local DDR complex yields, for all $T\in\Th$, the existence of $\uvec{v}_T^T\in\Xcurl{k}{T}$ such that
  \begin{equation}\label{eq:P.complex.ER:div:T}
    \uvec{z}_T\coloneq\Ediv{T}\Rdiv{T}\uvec{w}_T - \uvec{w}_T
    = \uCT{k}\uvec{v}_T^T.
  \end{equation}
  The above relation implies, accounting for the definitions of the reduction and extension operators, $\lproj{0}{F}\CF{k}\uvec{v}_T^T = 0$ for all $F\in\FT$.
  By virtue of Proposition \ref{eq:pi0.CF=0} below, we can assume that $\int_E v_E^T = 0$ for all $E\in\ET$ without loss of generality.
  We additionally notice that, by the complex property of the local DDR sequence, \eqref{eq:P.complex.ER:div:T} holds up to the substitution $\uvec{v}_T^T\gets\uvec{v}_T^T + \uGT{k}\underline{q}_T^T$ with $\underline{q}_T^T\in\Xgrad{k}{T}$.
  
  We leverage these observations as described hereafter.
  Let $E\in\Eh$, denote by $\elements{E}\subset\Th$ the set of mesh elements sharing $E$, and fix one $T_E\in\elements{E}$.
  For all $T\in\elements{E}\setminus\{T_E\}$, we select $\underline{q}_E^{T}\in\Xgrad{k}{E}$ such that $q_V^{T} = 0$ for all $V\in\VE$ and $v_E^{T} + \GE{k}\underline{q}_E^{T} = v_E^{T_E}\eqcolon v_E$ (the existence of such $\underline{q}_E^{T}$ is guaranteed by the conditions $\int_E (v_E-v_E^{T}) = 0$).
  
  Given a mesh element $T\in\Th$, we use the vectors $\underline{q}_E^T = (q_E^T, (0)_{V\in\VE})$, $E\in\ET$, constructed above to form a vector $(0, (q_F^T)_{F\in\FT}, (q_E^T)_{E\in\ET}, (0)_{V\in\VT})\in\Xgrad{k}{T}$ with face components selected so as to ensure that $\hat{\uvec{v}}_T^T\coloneq\uvec{v}_T^T + \uGT{k}\underline{q}_T^T$ can be glued together at faces shared by two different elements. Let us describe this selection.
  By construction, the edge components of $\hat{\uvec{v}}_T^T$ are independent of $T$, hence we denote them without the superscript ``$T$''.
  We moreover notice that, by \eqref{eq:P.complex.ER:div:T} combined with the definition \eqref{eq:CF} of the face curl, for all $F\in\FT$, $\hat{\bvec{v}}_{\cvec{R},F}^T = \hat{\bvec{v}}_{\cvec{R},F}$ with $\hat{\bvec{v}}_{\cvec{R},F}\in\Roly{k-1}(F)$ such that
  \[
  \int_F\hat{\bvec{v}}_{\cvec{R},F}\cdot\VROT r_F
  = \int_F z_F~r_F + \sum_{E\in\EF}\omega_{FE}\int_Ev_E~r_F
  \qquad\forall r_F\in\Poly{k}(F).
  \]
  The right-hand side of the above expression does not depend on $T$, showing that, as announced, $\hat{\bvec{v}}_{\cvec{R},F}$ is indeed single-valued (that is, it only depends on $F$ and not the elements to which $F$ belongs).
  For any $F\in\FT$ shared with an element $T'\in\Th$, we then proceed as follows to select $q_F$ in order to ensure that the face components in $\cRoly{k}(F)$ are also single-valued:
  If $\omega_{TF} = 1$, we let $q_F^T = 0$, otherwise we take $q_F^T\in\Poly{k-1}(F)$ such that the components of $\hat{\uvec{v}}_T^T = \uvec{v}_T^T + \uGT{k}\underline{q}_T^T$ and $\hat{\uvec{v}}_{T'}^{T'}=\uvec{v}_{T'}^{T'} + \uGTp{k}\underline{q}_{T'}^{T'}$ (with $\underline{q}_{T'}^{T'} = (0,(0)_{F\in\faces{T'}},(q_E^{T'})_{E\in\edges{T'}},(0)_{V\in\vertices{T'}})$) on $\cRoly{k}(F)$ match, i.e., recalling the definition \eqref{eq:cGF} of $\cGF{k}$,
  \[
  \int_F q_F^T~\DIV\bvec{y}_F
  = \sum_{E\in\EF}\omega_{FE}\int_E\trE(\underline{q}_E^T-\underline{q}_E^{T'})~(\bvec{y}_F\cdot\normal_F)
  + \int_F(\bvec{v}_{\cvec{R},F}^{\compl,T} - \bvec{v}_{\cvec{R},F}^{\compl,T'})\cdot\bvec{y}_F
  \qquad\forall\bvec{y}_F\in\cRoly{k}(F).
  \]
  This relation defines $q_F^T$ uniquely since $\DIV:\cRoly{k}(F)\to\Poly{k-1}(F)$ is an isomorphism.
  This concludes the construction of local vectors $\hat{\uvec{v}}_T^T$, $T\in\Th$, that can be glued at faces to form a global vector $\uvec{v}_h\in\Xcurl{k}{h}$ satisfying \eqref{eq:left-inverse.R:div}.
  \medskip\\
  \emph{(iv)} \emph{Proof of \eqref{eq:left-inverse.R:Pk}.}
  Let $r_h\in\Poly{k}(\Th)$ and define $\uvec{w}_h=((\bvec{w}_{\cvec{G},T},\bvec{0})_{T\in\Th},(0)_{F\in\Fh})\in\Xdiv{k}{h}$ such that, for all $T\in\Th$, $\bvec{w}_{\cvec{G},T}\in\Goly{k-1}(T)$ satisfies
  \begin{equation}\label{eq:C2.tail.def.w}
    \int_T \bvec{w}_{\cvec{G},T}\cdot\GRAD s_T = -\int_T (\lproj{0}{T}r_T-r_T)s_T\quad\forall s_T\in\Poly{k}(T).
  \end{equation}
  Since $\GRAD:\Poly{0,k}(T)\to\Goly{k-1}(T)$ is an isomorphism, this formula restricted to $s_T\in\Poly{0,k}(T)$ entirely defines $\bvec{w}_{\cvec{G},T}$; we then notice that \eqref{eq:C2.tail.def.w} is also satisfied for constant polynomials $s_T$ (since $\int_T (\lproj{0}{T}r_T-r_T)=0$), which justifies that it holds for all $s_T\in\Poly{k}(T)$.
  
  Since the components of $\uvec{w}_h$ on the faces are all equal to zero, \eqref{eq:C2.tail.def.w} and the definition \eqref{eq:DT} of $\DT{k}$ shows that $\DT{k}\uvec{w}_T=\lproj{0}{T}r_T-r_T$ for all $T\in\Th$, which shows that $\Dh{k}\uvec{w}_h=\lproj{0}{h}r_h-r_h$ and concludes the proof of \eqref{eq:left-inverse.R:Pk}.
\end{proof}

\begin{proposition}[Elements of the local curl space with zero-average face curl]\label{eq:pi0.CF=0}
  Let $T\in\Th$ and $\uvec{v}_T\in\Xcurl{k}{T}$ be such that, for all $F\in\FT$ with area $|F|$,
  \begin{equation*}
    \lproj{0}{F}\CF{k}\uvec{v}_F
    = -\frac{1}{|F|}\sum_{E\in\EF}\omega_{FE}\int_E v_E
    = 0.
  \end{equation*}
  Then, there exists $\uvec{w}_T\in\Xcurl{k}{T}$ such that 
  \begin{equation}\label{eq:uCT.w=uCT.v}
    \lproj{0}{E}w_E = 0\ \text{ for all $E\in\ET$},\quad\text{and}\quad
    \uCT{k}\uvec{w}_T = \uCT{k}\uvec{v}_T.
  \end{equation}
\end{proposition}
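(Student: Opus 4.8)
The plan is to produce $\uvec{w}_T$ by correcting $\uvec{v}_T$ with a discrete gradient, so that the second condition in \eqref{eq:uCT.w=uCT.v} holds for free. Concretely, I would look for $\underline{q}_T\in\Xgrad{k}{T}$ supported on the vertices only, i.e.\ $\underline{q}_T=(0,(0)_{F\in\FT},(0)_{E\in\ET},(\alpha_V)_{V\in\VT})$ for suitable values $(\alpha_V)_{V\in\VT}$, and set $\uvec{w}_T\coloneq\uvec{v}_T-\uGT{k}\underline{q}_T$. The local complex property $\uCT{k}\uGT{k}=\uvec{0}$ then immediately gives $\uCT{k}\uvec{w}_T=\uCT{k}\uvec{v}_T$, regardless of the choice of $(\alpha_V)_{V\in\VT}$. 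It therefore only remains to pick these vertex values so as to enforce the edge-average condition $\lproj{0}{E}w_E=0$.

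Next I would translate the edge-average condition into a requirement on $(\alpha_V)_{V\in\VT}$. The edge component of $\uvec{w}_T$ is $w_E=v_E-\GE{k}\underline{q}_E$ with $\underline{q}_E=(0,(\alpha_V)_{V\in\VE})$. Testing the definition \eqref{eq:GE} of the edge gradient against the constant $r_E=1$ yields $\int_E\GE{k}\underline{q}_E=\alpha_{V_2}-\alpha_{V_1}$, where $V_1,V_2$ are the vertices of $E$ ordered along $\tangent_E$. Hence $\lproj{0}{E}w_E=0$ is equivalent to $\alpha_{V_2}-\alpha_{V_1}=\int_E v_E$ for every $E\in\ET$; in other words, $(\alpha_V)_{V\in\VT}$ must be a discrete primitive of the edge data $(\int_E v_E)_{E\in\ET}$ on the $1$-skeleton of $T$.

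The crux is the existence of such a primitive, and this is where both the hypothesis and the topology of $T$ enter. I would introduce the lowest-order field $\uvec{b}_T\in\Xcurl{0}{T}$ whose edge values are the averages $b_E\coloneq\lproj{0}{E}v_E=\frac{1}{|E|}\int_E v_E$. By the definition of the face curl at $k=0$, the hypothesis $\sum_{E\in\EF}\omega_{FE}\int_E v_E=0$ reads exactly $\CF{0}\uvec{b}_F=0$ for all $F\in\FT$, that is $\uCT{0}\uvec{b}_T=\uvec{0}$. Invoking the exactness of the local lowest-order DDR complex (which holds since $T$ is topologically trivial; see the proof of \cite[Theorem 2]{Di-Pietro.Droniou:21*2} applied to $T$), there exists $\underline{\alpha}_T=(\alpha_V)_{V\in\VT}\in\Xgrad{0}{T}$ such that $\uGT{0}\underline{\alpha}_T=\uvec{b}_T$; unwinding the definition \eqref{eq:def.uGh} of $\uGT{0}$, this is precisely $\alpha_{V_2}-\alpha_{V_1}=\int_E v_E$ for all $E\in\ET$, the sought primitive. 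This is the discrete counterpart of the fact that a closed $1$-form on the simply connected $T$ is exact: the hypothesis encodes closedness, and the local exactness is what provides the potential.

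With $(\alpha_V)_{V\in\VT}$ in hand I would close the argument: the resulting $\uvec{w}_T=\uvec{v}_T-\uGT{k}\underline{q}_T$ satisfies $\lproj{0}{E}w_E=0$ for all $E\in\ET$ by construction and $\uCT{k}\uvec{w}_T=\uCT{k}\uvec{v}_T$ by the complex property, establishing \eqref{eq:uCT.w=uCT.v}. The only genuine obstacle is the topological step producing the vertex potential; everything else is a direct manipulation of the defining formulas together with $\uCT{k}\uGT{k}=\uvec{0}$.
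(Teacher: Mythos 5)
Your proof is correct, but it follows a genuinely different route from the paper's. The paper proceeds by direct construction: it sets $w_E\coloneq v_E-\lproj{0}{E}v_E$, then solves for $\bvec{w}_{\cvec{R},F}\in\Roly{k-1}(F)$ and $\bvec{w}_{\cvec{R},T}\in\Roly{k-1}(T)$ so as to enforce $\CF{k}\uvec{w}_F=\CF{k}\uvec{v}_F$ and $\Gcproj{k}{T}\cCT{k}\uvec{w}_T=\Gcproj{k}{T}\cCT{k}\uvec{v}_T$, using the isomorphisms $\VROT_F:\Poly{0,k}(F)\to\Roly{k-1}(F)$ and $\CURL:\cGoly{k}(T)\to\Roly{k-1}(T)$; the hypothesis $\lproj{0}{F}\CF{k}\uvec{v}_F=0$ is what makes the face-level equation solvable, since $\bvec{w}_{\cvec{R},F}$ can only control the face curl modulo constants. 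You instead correct by a discrete gradient, $\uvec{w}_T\coloneq\uvec{v}_T-\uGT{k}\underline{q}_T$ with $\underline{q}_T$ supported on the vertices, so that $\uCT{k}\uvec{w}_T=\uCT{k}\uvec{v}_T$ is automatic from $\uCT{k}\uGT{k}=\uvec{0}$, and the whole problem reduces to finding a vertex potential $(\alpha_V)_{V\in\VT}$ with $\alpha_{V_2}-\alpha_{V_1}=\int_E v_E$; you obtain it from the hypothesis (closedness of the lowest-order edge data) via the exactness $\Ker\uCT{0}=\Image\uGT{0}$ of the local $\DDR{0}$ complex on the topologically trivial $T$ --- the same local exactness the paper already invokes elsewhere, so this is a legitimate ingredient. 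Your argument is shorter and conceptually cleaner (it exhibits $\uvec{w}_T$ as cohomologous to $\uvec{v}_T$ in the local complex, a slightly stronger conclusion that is equally compatible with how the proposition is used in the proof of \eqref{eq:left-inverse.R:div}), at the price of importing the lowest-order local exactness; the paper's construction is entirely elementary and self-contained, relying only on the isomorphism properties of the trimmed polynomial decompositions.
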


\begin{proof}
  Setting, for all $E\in\ET$, $w_E\coloneq v_E - \lproj{0}{E} v_E$ ensures that the first condition in \eqref{eq:uCT.w=uCT.v} is verified.
  Recalling the definition \eqref{eq:CF} of the face curl, and noticing that we already have $\lproj{0}{F}\CF{k}\uvec{w}_F=0$, enforcing $\CF{k}\uvec{w}_F = \CF{k}\uvec{v}_F$ amounts to selecting $\bvec{w}_{\cvec{R},F}\in\Roly{k-1}(F)$ such that
  \[
  \int_F\bvec{w}_{\cvec{R},F}\cdot\VROT_Fr_F
  = 
  \int_F\bvec{v}_{\cvec{R},F}\cdot\VROT_Fr_F
  -\sum_{E\in\EF}\omega_{FE}\int_E\lproj{0}{E}v_E~r_F
  \qquad
  \forall r_F\in\Poly{0,k}(F).
  \]
  Since $\VROT_F:\Poly{0,k}(F)\to\Roly{k-1}(F)$ is an isomorphism, this condition defines, for all $F\in\FT$, a unique value for $\bvec{w}_{\cvec{R},F}$.
  We then set $\uvec{w}_F \coloneq (\bvec{w}_{\cvec{R},F},\bvec{0},(w_E)_{E\in\EF})$.
  The equality of face curls enforced above implies, by the relation between face and element curls of \cite[Proposition 4]{Di-Pietro.Droniou:21*2}, that $\Gproj{k-1}{T}\cCT{k}\uvec{v}_T = \Gproj{k-1}{T}\cCT{k}\uvec{w}_T$.
  Finally, to enforce $\Gcproj{k}{T}\cCT{k}\uvec{v}_T = \Gcproj{k}{T}\cCT{k}\uvec{w}_T$, recalling the definition \eqref{eq:cCT} of the element curl, we select $\bvec{w}_{\cvec{R},T}\in\Roly{k-1}(T)$ such that, for all $\bvec{z}_T\in\cGoly{k}(T)$,
  \[
  \int_T\bvec{w}_{\cvec{R},T}\cdot\CURL\bvec{z}_T
  = \int_T\bvec{v}_{\cvec{R},T}\cdot\CURL\bvec{z}_T
  + \sum_{F\in\FT}\omega_{TF}\int_F\trFt{k}(\uvec{v}_F-\uvec{w}_F)\cdot(\bvec{z}_T\times\normal_F).
  \]
  This relation defines $\bvec{w}_{\cvec{R},T}$ uniquely since $\CURL:\cGoly{k}(T)\to\Roly{k-1}(T)$ is an isomorphism.
  The vector $\uvec{w}_T=(\bvec{w}_{\cvec{R},T},\bvec{0},(\bvec{w}_{\cvec{R},F},\bvec{0})_{F\in\FT},(w_E)_{E\in\ET})$ constructed above then fulfils the second condition in \eqref{eq:uCT.w=uCT.v}, thus concluding the proof.
\end{proof}

\begin{remark}[Chain homotopy between the $\DDR{k}$ and $\DDR{0}$ complexes]
The proof of Theorem \ref{thm:ddr:cohomology}, can also be interpreted through the concept of \emph{chain homotopy}.
Specifically, it can be shown that the reduction $\Rbullet{h}$ is a chain equivalence with extension $\Ebullet{h}$ as a chain-homotopy inverse for $\bullet\in \{\GRAD, \CURL, \DIV \}$. Since \eqref{eq:left-inverse.E} already shows that $\Rbullet{h} \Ebullet{h} = \Id_{\Xbullet{k}{h}}$, it is sufficient to find a chain homotopy between $\Ebullet{h} \Rbullet{h}$ and $\Id_{\Xbullet{k}{h}}$, namely, mappings $\Dgrad:\Xgrad{k}{h} \to \mathbb R$, $\Dcurl : \Xcurl{k}{h} \to \Xgrad{k}{h}$, $\Ddiv: \Xdiv{k}{h} \to \Xcurl{k}{h}$, $\mathcal D_{\Poly{k}(\Th)}: \Poly{k}(\Th) \to \Xdiv{k}{h}$ such that
\begin{subequations}\label{eq:left-inverse.D}
  \begin{gather}\label{eq:left-inverse.D:grad}
    \text{%
      $\Id_{\Xgrad{k}{h}} - \Egrad{h}\Rgrad{h}  = \Igrad{k}{h} \Dgrad + \Dcurl \uGh{k}$, 
    }
    \\\label{eq:left-inverse.D:curl}
    \text{%
      $\Id_{\Xcurl{k}{h}} - \Ecurl{h}\Rcurl{h}  = \uGh{k}\Dcurl + \Ddiv \uCh{k}$,
    }
    \\\label{eq:left-inverse.D:div}
    \text{%
      $\Id_{\Xdiv{k}{h}} - \Ediv{h}\Rdiv{h}  = \uCh{k}\Ddiv + \mathcal D_{\Poly{k}(\Th)} \Dh{k}$,
    }
    \\\label{eq:left-inverse.D:pol}
    \text{%
      $\Id_{\Poly{k}(\Th)} - \lproj{0}{h}  = \Dh{k} \mathcal D_{\Poly{k}(\Th)}$
    }.
  \end{gather}
\end{subequations}
The design of these mappings relies on two key points. First, the fields $\underline{q}_h\in\Xgrad{k}{h}$ and $\uvec{v}_h\in\Xcurl{k}{h}$ constructed in Points  (ii) and (iii) of the proof of Theorem \ref{thm:ddr:cohomology}, respectively, are unique if we impose $\Rgrad{h}\underline{q}_h=\underline{0}$ and $\Rcurl{h}\uvec{v}_h=\uvec{0}$. Second, by setting $\Pibullet{h} \coloneq \Id_{\Xbullet{k}{h}} - \Ebullet{h}\Rbullet{h}$ for $\bullet\in\{\GRAD,\CURL,\DIV\}$, 
it can be shown that $\Pibullet{h}$ is a cochain map and that $\Image\Pibullet{h} = \XbulletZR{k}{h}$.
\end{remark}

\begin{remark}[Zero-reduction sub-complex]
Let us define the zero-reduction subspaces of the DDR spaces \eqref{eq:ddr.spaces} (which are simply the kernels of the reductions):
\[
\text{%
  $\XbulletZR{k}{h} \coloneq\left\{
  \underline{x}_h\in\Xbullet{k}{h}\st\Rbullet{h}\underline{x}_h = \underline{0}
  \right\}$
  for $\bullet\in\{\GRAD,\CURL,\DIV\}$, and
  $\widetilde{P}_h^k\coloneq\left\{r_h\in\Poly{k}(\Th)\st\lproj{0}{h}r_h = 0\right\}$.
}
\]
It can then be checked that the zero-reduction subcomplex
\begin{equation}\label{eq:ddrZR.complex}
  \begin{tikzcd}
    0\arrow{r}{\Igrad{k}{h}} &[1.5em] \XgradZR{k}{h}
    \arrow{r}{\uGh{k}} & \XcurlZR{k}{h}
    \arrow{r}{\uCh{k}} & \XdivZR{k}{h} 
    \arrow{r}{\Dh{k}}  & \widetilde{P}_h^k
    \arrow{r} & 0,
  \end{tikzcd}
\end{equation}
is well defined, and an equivalent formulation of Lemma \ref{lem:propC2} is that this subcomplex is exact, irrespective of the topology of $\Omega$.
With a standard inclusion of the $\DDR{0}$ spaces into the $\DDR{k}$ spaces, we have $\Xbullet{k}{h}=\XbulletZR{k}{h}\oplus \Xbullet{0}{h}$ for $\bullet\in\{\GRAD,\CURL,\DIV\}$ and $\Poly{k}(\Th)=\widetilde{P}_h^k\oplus \Poly{0}(\Th)$.
Hence, the exactness of the zero-reduction subcomplex is another way of seeing that the information on the topology of the domain is completely encapsulated in the lowest-order portion of the $\DDR{k}$ complex.
\end{remark}

\begin{remark}[Cohomology of FEEC]
The standard technique to analyse the cohomology of FEEC is through the usage of $L^2$-bounded projection operators between the de Rham complex \eqref{eq:de-rham} and the FEEC complex \cite[Section 7.5]{Arnold:18}. The design of these operators is not trivial \cite[Section 5]{Arnold.Falk.ea:06}, and they must satisfy approximation properties which sometimes requires to consider fine enough meshes.

Our approach, on the contrary, does not rely on any analytical property of a cochain map between the discrete and continuous complexes. It only uses the fact (widely known in algebraic topology), that CW complexes have the same cohomology as the continuous de Rham complex, and establishes that the higher-order portion of the discrete complex does not play any role in its cohomology. 

In passing, we note that this approach could be applied to the FEEC setting, using as reduction maps the canonical embedding of a low-order polynomial space into its high-order version, and as extension maps the interpolations based on the degrees of freedom. We also refer the reader to the recent work \cite{Bonaldi.Di-Pietro.ea:23}, in which two exterior calculus polytopal complexes are designed and their cohomology is analysed using the same technique as above.
\end{remark}

\begin{remark}[Computation of the cohomology spaces of the DDR complex]\label{rem:generators}
The proof of Theorem \ref{thm:ddr:cohomology} suggests the following efficient procedure to construct generators of cohomology spaces $\coH{1, (k)}$ and $\coH{2, (k)}$ (the relevant cohomology spaces in practical applications, since $\coH{0,(k)}$ and $\coH{3,(k)}$ are trivial). 
Let us focus on $\coH{1, (k)}$, the other case being similar. Efficient graph-based algorithms are available to compute generators
of the first cohomology space of the CW complex, see for instance \cite{Dotko.Specogna:13}. Let $(\mathfrak{g}_j)_{j\in \{1, \dots, b_1\}}$ be representatives in $\Eh^*$ of these generators.
An inverse $\kappa_{\CURL}^{-1}$ of the de Rham map in \eqref{eq:DDR0.CW} can trivially be constructed, and provides representatives $(\kappa_{\CURL}^{-1}\mathfrak{g}_j)_{j\in \{1, \dots, b_1\}}$ in $\Xcurl{0}{h}$ of generators of $\coH{1,(0)}$. We can then explicitly get $\Ecurl{h}$ through local computations on mesh elements (see \eqref{eq:E.curl.h}), and thus obtain representatives $(\Ecurl{h}\kappa_{\CURL}^{-1}\mathfrak{g}_j)_{j\in \{1, \dots, b_1\}}$ in $\Xcurl{k}{h}$ of generators of $\coH{1,(k)}$.
\end{remark}

\section{Conclusion and perspectives}\label{sec:conclusion}
In this work, we establish that the discrete de Rham complex DDR($k$) and its serendipity version are isomorphic in cohomology to the continuous de Rham complex.
The proof hinges on the construction of reduction and extension cochain maps between the $\DDR{k}$ complex for $k\ge 1$ and the $\DDR{0}$ complex, which is representative of the usual low-order cochain complex defined on the CW complex of the mesh.
This result represents an essential theoretical and practical step towards using the DDR construction (and, more generally, high-order polytopal complexes) to discretise physical problems on domains with non-trivial topologies.

On the theoretical side, one of the virtues of our result is that extension maps allow to extend standard cohomology constructions of the low-order cochain complex to the high-order complex.
As an example, the well-posedness of certain electromagnetic boundary value problems requires the usage of so-called \emph{relative cohomology spaces}.
In this case, instead of trying to develop a relative cohomology theory on the sequence $\DDR{k}$, we can exploit extension maps to define such relative cohomology spaces starting from those of $\DDR{0}$, where a standard de Rham isomorphism (relative, in this case) with respect to the continuous de Rham complex can be readily established. 

On the practical side, the computation of cohomology spaces consists in finding the quotient vector spaces \eqref{eq:cohomology.spaces:discrete}.
Obtaining bases of these quotient spaces requires the solution of expensive linear algebra problems.
The fundamental computational advantage of our construction is that it provides an explicit and inexpensive way to find generators of the cohomology spaces of the DDR complex starting from those of the CW complex associated with the mesh. 

Future work will explore the application of this result to obtain representations of the cohomology spaces of the DDR complex, and the study of their analytical properties required for their usage in schemes for relevant problems on non-trivial topologies.


\section*{Declarations}

The authors acknowledge the partial support of \emph{Agence Nationale de la Recherche} grant ANR-20-MRS2-0004 NEMESIS.
Daniele Di Pietro also acknowledges the partial support of I-Site MUSE grant ANR-16-IDEX-0006 RHAMNUS.
J\'er\^ome Droniou was partially supported by the Australian Research Council through the Discovery Projects funding scheme (Project No.\ DP210103092).

The authors have no relevant financial or non-financial interests to disclose.


\printbibliography

\end{document}